\documentclass[a4paper, 10pt]{amsart}
\textwidth16.1cm \textheight21cm \oddsidemargin-0.1cm
\evensidemargin-0.1cm
\usepackage{amsmath}
\usepackage{amssymb, color}

\theoremstyle{plain}
\newtheorem{theorem}{\bf Theorem}[section]
\newtheorem{proposition}[theorem]{\bf Proposition}
\newtheorem{lemma}[theorem]{\bf Lemma}

\theoremstyle{definition}

\newcommand{\N}{\mathbb N}
\newcommand{\Z}{\mathbb Z}
\newcommand{\R}{\mathbb R}

 \DeclareMathOperator{\ord}{ord}

 \DeclareMathOperator{\supp}{supp}

\renewcommand{\t}{\, | \,}

\numberwithin{equation}{section}

\begin{document}

\title[The system of sets of lengths in Krull monoids  under set addition]{The system of sets of lengths in Krull monoids \\ under set addition}

\address{Institut f\"ur Mathematik und Wissenschaftliches Rechnen\\
Karl--Fran\-zens--Universit\"at Graz, NAWI Graz \\
Heinrichstra{\ss}e 36\\
8010 Graz, Austria}

\email{alfred.geroldinger@uni-graz.at}

\address{Universit{\'e} Paris 13 \\ Sorbonne Paris Cit{\'e} \\ LAGA, CNRS, UMR 7539, Universit{\'e} Paris 8 \\ F-93430, Villetaneuse \\ France} \email{schmid@math.univ-paris13.fr}

\author{Alfred Geroldinger  and Wolfgang A. Schmid}

\thanks{This work was supported by
the Austrian Science Fund FWF, Project Number P26036-N26, by the Austrian-French Amad{\'e}e Program FR03/2012, and  by the ANR Project Caesar, Project Number ANR-12-BS01-0011.}

\keywords{Krull monoids, sets of lengths, zero-sum sequences,  minimal sets of distances, maximal orders}

\subjclass[2010]{11B30, 11P70, 13A05, 20M13, 20M14}

\begin{abstract}
Let $H$ be a Krull monoid with class group $G$ and suppose that each class contains a prime divisor. Then every element $a \in H$ has a factorization into irreducible elements, and the set $\mathsf L (a)$ of all possible factorization lengths for $a$ is the set of lengths of $a$. We consider the system $\mathcal L (H) = \{ \mathsf L (a) \mid a \in H \}$ of all sets of lengths, and we characterize (in terms of the class group $G$) when $\mathcal L (H)$ is additively closed under set addition.
\end{abstract}

\maketitle

\bigskip
\section{Introduction and Main Result} \label{1}
\bigskip

By a  monoid, we mean a commutative cancellative semigroup with unit element, and we say that a monoid is atomic if  every non-unit can be written as a finite product of irreducible elements (also called atoms). Let $H$ be an atomic monoid. If $a \in H$ is a non-unit and $a = u_1 \cdot \ldots \cdot u_k$ is a factorization of $a$ into $k$ atoms, then $k$ is called the length of the factorization. The set $\mathsf L (a) \subset \N$ of all possible factorization lengths  is called the set of lengths of $a$. It is convenient to set $\mathsf L (a) = \{0\}$ for each unit $a \in H$, and we denote by $\mathcal L (H) = \{ \mathsf L (a) \mid a \in H \}$ the system of sets of lengths of $H$. All $v$-noetherian monoids (in particular, Krull monoids and the monoids of non-zero elements of noetherian domains) are atomic monoids in which all sets of lengths are finite.
Let $a, b \in H$. Then the sumset $\mathsf L (a) + \mathsf L (b)= \{l+l' \mid  l \in \mathsf L (a) , \, l' \in \mathsf L (b) \}$ is  contained in $\mathsf L (ab)$. Thus, if  $|\mathsf L (a)| > 1$ and $k \in \N$, then the $k$-fold sumset $k \mathsf L (a) = \mathsf L (a) + \ldots + \mathsf L (a)$ is contained in $\mathsf L (a^k)$, and hence $|\mathsf L (a^k)|>k$.

The system of sets of lengths $\mathcal L (H)$ is said to be {\it additively closed} if the sumset $L+L' \in \mathcal L (H)$ for all sets of lengths $L, L' \in \mathcal L (H)$.
Clearly, set addition is commutative, $\{0\} = \mathsf L (1) \in \mathcal L (H)$ is the zero-element, and it is the only invertible element. Thus $\mathcal L (H)$ is additively closed if and only if $(\mathcal L (H), +)$ is a commutative reduced semigroup with respect to set addition. Indeed, in this case it is an acyclic semigroup in the sense of \cite{Ci-Ha-Se10a}. In this paper, Cilleruelo, Hamidoune, and Serra study addition theorems in acyclic semigroups, and systems of subsets of certain semigroups with set addition as the operation belong to their main examples.

The system of sets of lengths (together with invariants controlling sets of lengths, such as elasticities and sets of distances) are the best investigated invariants in factorization theory. However, the system of sets of lengths has been explicitly determined only in some very special cases (they include Krull monoids with small class groups,  \cite[Theorem 7.3.2]{Ge-HK06a}, \cite{Ba-Ge14b}; certain numerical monoids,  \cite{A-C-H-P07a}; and  self-idealizations of principal ideal domains, \cite[Corollary 16]{Ch-Sm13a}).
Recent studies of direct-sum decompositions in module theory revealed monoids of modules  which are Krull and whose systems of sets of lengths are additively closed (\cite[Section 6.3]{Ba-Ge14b}). This phenomenon has not been observed so far in any relevant cases, and it has surprising consequences. Note that, if $H' \subset H$ is a divisor-closed submonoid, then $\mathcal L (H') \subset \mathcal L (H)$, and in all cases studied so far, a proper containment of the monoids implied a proper containment of their systems of sets of lengths. In contrast to this, suppose that   $H$ is an atomic monoid such that $\mathcal L (H)$ is additively closed. Then the direct product $H \times H$ is an atomic monoid, $H$ is a divisor-closed submonoid of $H \times H$ (up to units),  and $\mathcal L (H \times H) = \{ L+L' \mid L, L' \in \mathcal L (H) \} = \mathcal L (H)$. Proposition \ref{2.2} provides  more sophisticated consequences of the fact that a system of sets of lengths is additively closed.

Krull monoids having the property that each class contains a prime divisor have found the greatest interest in factorization theory, and they will be  the focus of the present paper. Their arithmetic can be studied with methods from Additive Combinatorics (\cite{Ge09a}). Based on a couple of recent results (see the proofs of Propositions \ref{3.1} and \ref{3.9}), we show that their systems of sets of lengths are additively closed only in a very small number of exceptional cases. Here is our main result.

\medskip
\begin{theorem} \label{1.1}
Let $H$ be a Krull monoid with class group $G$ and suppose that each class contains a prime divisor. Then the system of sets of lengths $\mathcal L (H)$ is additively closed under set addition if and only if $G$ has one of the following forms{\rm \,:}
\begin{enumerate}
\item[(a)] $G$ is cyclic of order $|G| \le 4$.
\item[(b)] $G$ is an elementary $2$-group of rank $r \le 3$.
\item[(c)] $G$ is an elementary $3$-group of rank $r \le 2$.
\item[(d)] $G$ is infinite.
\end{enumerate}
\end{theorem}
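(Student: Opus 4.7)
The plan is to reduce to the combinatorial model $\mathcal B(G)$, the monoid of zero-sum sequences over $G$. Under the hypothesis that every class contains a prime divisor there is a standard transfer homomorphism $H \to \mathcal B(G)$ yielding $\mathcal L(H) = \mathcal L(\mathcal B(G))$, so additive closure of $\mathcal L(H)$ depends only on $G$. All subsequent analysis is carried out in $\mathcal B(G)$.

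For the ``if'' direction, closure is verified case by case from known complete descriptions of $\mathcal L(\mathcal B(G))$. In case (d), by a theorem of Kainrath, $\mathcal L(\mathcal B(G))$ consists of $\{0\}$, $\{1\}$, and every finite nonempty subset of $\N_{\ge 2}$; since $\min(L+L') \ge 4$ whenever $\min L,\min L' \ge 2$, the Minkowski sum is again a finite subset of $\N_{\ge 2}$, and sums involving $\{0\}$ or $\{1\}$ are trivial. In the small finite cases (a)--(c) one invokes the explicit descriptions recalled in \cite[Theorem 7.3.2]{Ge-HK06a} and \cite{Ba-Ge14b}: every set of lengths is a singleton, an interval in $\N_{\ge 2}$, or an arithmetic progression of a tightly restricted shape, and a finite case analysis shows the family is stable under $+$.

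For the ``only if'' direction the plan is to exhibit, for every group $G$ not in (a)--(d), elements $B, B' \in \mathcal B(G)$ with $\mathsf L(B) + \mathsf L(B') \notin \mathcal L(\mathcal B(G))$. The obstruction is phrased in terms of the set of distances $\Delta(\mathcal B(G))$ and the minimal set of distances $\Delta^*(G)$, invariants which tightly control the arithmetic progressions of differences that can appear inside a set of lengths. The subfamilies to handle are: cyclic groups of order $\ge 5$; elementary $p$-groups for $p \ge 5$; elementary $2$-groups of rank $\ge 4$; elementary $3$-groups of rank $\ge 3$; and all remaining non-elementary or mixed-prime finite abelian groups, such as $C_2 \oplus C_4$ and $C_3 \oplus C_9$. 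For each such group one constructs short zero-sum sequences $B, B'$ whose length sets, once added, acquire a forbidden difference value (for instance $d \notin \Delta^*(G)$) or a gap pattern ruled out by the structural results Propositions \ref{3.1} and \ref{3.9}. When a subgroup reduction applies --- using $\mathcal L(\mathcal B(G')) \subset \mathcal L(\mathcal B(G))$ for $G' \le G$ --- one must also verify that the obstruction detected in $\mathcal B(G')$ persists in the enlarged monoid.

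The hardest step will be this inheritance argument in the ``only if'' direction. The inclusion $\mathcal L(\mathcal B(G')) \subset \mathcal L(\mathcal B(G))$ supplies the summands $\mathsf L(B),\mathsf L(B')$, but their sum, which fails to be a set of lengths in $\mathcal B(G')$, could a priori still be realized in the strictly larger system $\mathcal L(\mathcal B(G))$. Ruling this out requires locating the obstruction at the level of structural invariants that every set of lengths in $\mathcal B(G)$ must satisfy; this is precisely the role of Propositions \ref{3.1} and \ref{3.9}, which provide uniform constraints (on minimum distances, periods, and endpoints) that allow the counterexamples constructed for minimal ``bad'' groups to propagate to all larger excluded groups in a uniform way.
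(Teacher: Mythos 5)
Your overall architecture matches the paper's: transfer to $\mathcal B(G)$, Kainrath's realization theorem for infinite $G$, explicit descriptions of $\mathcal L(G)$ for the small groups in (a)--(c), and, for the excluded finite groups, a contradiction obtained by comparing differences forced into sets of lengths by iterated sumsets against the structural invariants $\Delta_1(G)$ and $\Delta^*(G)$. For the generic case ($\exp(G)=n\ge 4$, $\mathsf r(G)\ge 2$, excluding $C_2\oplus C_4$ and $C_5\oplus C_5$) this is exactly what the paper does: from $\{2,d\}\in\mathcal L(G)$ one gets $d-2\in\Delta_1(G)$ under the closure hypothesis, and Proposition \ref{3.1}.2--3 together with Schmid's bounds on $\Delta^*(G)$ yield the contradiction. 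Note also that your worry about ``inheritance'' from subgroups largely dissolves in this framework, since the obstruction lives in the invariants $\Delta_1(G)$ and $\Delta^*(G)$ of the ambient group itself.

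The genuine gap is in the exceptional families, which you propose to handle by the same mechanism (``a forbidden difference value'' or ``a gap pattern ruled out by Propositions \ref{3.1} and \ref{3.9}''). For $G=C_2^r$ with $r\ge 4$ one has $\Delta(G)=\Delta^*(G)=[1,r-1]$, and for $C_3^r$ the analogous phenomenon occurs, so \emph{no} difference arising from a sumset is forbidden and the $\Delta_1$-versus-$\Delta^*$ comparison proves nothing. Nor can Proposition \ref{3.1}.1 rule out a specific gap pattern: the AAMP bound $M$ is an ineffective constant, so the structure theorem tolerates arbitrary (bounded) irregularities at the ends of a set of lengths. The paper instead runs bespoke arguments: for $C_2^r$, $r\ge 4$, it shows $\{4,r+2,2r\}+\big(2k+(r-1)\cdot[0,k]\big)\notin\mathcal L(G)$ via a length-count forcing almost all atoms in extremal factorizations to have length $r+1$ or $2$, a classification of the minimal zero-sum sequences $U_I,V_I$ over a near-basis support (Lemmas \ref{atoms}--\ref{lem_charr-2}), and an explicit exchange producing the missing length $\max L_k-1$; for $C_3^r$, $r\ge 3$, and for $C_5\oplus C_5$ and $C_2\oplus C_4$, it uses the characterization of sequences attaining the elasticity $\rho(G)=\mathsf D(G)/2$ (as products $(-U_i)U_i$ with $|U_i|=\mathsf D(G)$) plus the known classification of atoms of maximal length to insert a forbidden intermediate length. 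None of this is recoverable from the invariant-level constraints you cite, so as written your plan does not close these cases.
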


Clearly, the groups given in (a) - (c) are precisely those groups $G$ with $\exp (G) + \mathsf r (G) \le 5$.
In Section \ref{2} we outline that it is sufficient to prove Theorem \ref{1.1} for a special class of  Krull monoids and that the statement of Theorem \ref{1.1} is valid too for classes of non-Krull monoids (see Proposition \ref{2.1}). The proof of Theorem \ref{1.1} will be given in Section \ref{3}. The idea of the proof will be outlined after Proposition \ref{3.1} when we have the required concepts at our disposal.

\bigskip
\section{Context and applications} \label{2}
\bigskip

We denote by $\mathbb N$ the set of positive integers and set $\N_0 = \N \cup \{0\}$. For real number $a, b \in \R$, we denote by $[a, b] = \{ x \in \Z \mid a \le x \le b \}$ the discrete interval between $a$ and $b$. For every positive integer $n \in \N$, $C_n$ means a cyclic group of order $n$. Let $L, L' \subset \Z$ be subsets of the integers. Then $L +L' = \{ a+b \mid a \in L, b \in L' \}$ is the {\it sumset} of $L$ and $L'$. For $k \in \N$, we denote by $k L = L + \ldots + L$ the {\it $k$-fold sumset} of $L$ and by $k \cdot L = \{ k a \mid a \in L \}$ the {\it dilation} of $L$ by $k$. A positive integer $d \in \N$ is called a {\it distance} of $L$ if there exist elements $k, l \in L$ such that $k < l$, $d=l-k$, and $[k,l] \cap L = \{k,l\}$. We denote by $\Delta (L)$ the {\it set of distances} of $L$. We use the convention that $\max \emptyset = \min \emptyset = 0$.

By a {\it monoid}, we always mean a commutative semigroup with identity which satisfies the cancellation laws. If $R$ is a domain, then the multiplicative monoid $R^{\bullet} = R \setminus \{0\}$ of nonzero elements of $R$ is a monoid, and all terminology introduced for monoids will be used for domains in an obvious sense. In particular, we say that $R$ is atomic if $R^{\bullet}$ is atomic, and we set $\mathcal L (R) = \mathcal L (R^{\bullet})$ for the system of sets of lengths of $R$, and so on. A monoid $F$ is called {\it free abelian with basis} $P \subset F$ if every $a \in F$ has a unique representation of the form
\[
a = \prod_{p \in  P} p^{\mathsf v_p(a) } \quad \text{with} \quad
\mathsf v_p(a) \in \N_0 \ \text{ and } \ \mathsf v_p(a) = 0 \ \text{
for almost all } \ p \in  P \,.
\]
Let $F$ be free abelian with basis $P$. We set $F = \mathcal F( P)$ and call
\[
 |a| = \sum_{p \in  P} \mathsf v_p (a) \quad \text{the \ {\it
length}} \ \text{of} \ a \  \quad \text{and} \quad \supp (a) = \{p \in  P \mid \mathsf v_p (a) > 0 \} \quad \text{the {\it support} of} \ a \,.
\]
Clearly, $P \subset F$ is the set of primes of $F$, and if $P$ is nonempty, then, for the system of sets of lengths, we have $\mathcal L (F) = \{ \{y\} \mid y \in \N_0\}$.
A monoid $H$ is said to be a {\it Krull monoid} if it satisfies one of the following equivalent properties (\cite[Theorem 2.4.8]{Ge-HK06a} or \cite[Chapter 22]{HK98}):
\begin{enumerate}
\item[(a)] $H$ is completely integrally closed and satisfies the ascending chain condition on divisorial ideals.

\item[(b)] $H$ has a divisor homomorphism into a free abelian monoid (i.e., there is a homomorphism $\varphi \colon H \to \mathcal F (P)$ such that, for each two elements $a, b \in H$, $a$ divides $b$ in $H$  if and only if $\varphi (a)$ divides $\varphi (b)$ in $\mathcal F (P)$).
\end{enumerate}
A domain $R$ is a Krull domain if and only if $R^{\bullet}$ is a Krull monoid, and thus Property (a) shows that a noetherian domain is Krull if and only if it is integrally closed. Holomorphy rings in global fields and regular congruence monoids in these domains are Krull monoids with finite class groups such that each class contains infinitely many prime divisors (\cite[Section 2.11]{Ge-HK06a}). Monoid domains and power series domains that are Krull are discussed in \cite{Ki-Pa01, Ch11a}. For monoids of modules that are Krull we refer to \cite{Ba-Wi13a, Fa12a, Ba-Ge14b}.

We discuss a Krull monoid of a combinatorial flavor which plays a universal role in the study of sets of lengths in Krull monoids. Let $G$ be an additive abelian group. Following the tradition of combinatorial number theory (\cite{Gr13a}), the elements of $\mathcal F (G)$ will be called {\it sequences} over $G$. Let $S = g_1 \cdot \ldots \cdot g_l \in \mathcal F (G)$ be a sequence over $G$. Then  $\sigma (S) = g_1+ \ldots + g_l \in G$ is the sum of $S$, and $S$ is called a {\it zero-sum sequence} if $\sigma (S) = 0$. Clearly, the set $\mathcal B (G)$ of all zero-sum sequences over $G$ is a submonoid of $\mathcal F (G)$, and the embedding $\mathcal B (G) \hookrightarrow \mathcal F (G)$ is a divisor homomorphism. Thus $\mathcal B (G)$ is a Krull monoid by Property (b). It is easy to check that $\mathcal B (G)$ is free abelian if and only if $|G| \le 2$. Suppose that $|G| \ge 3$. Then $\mathcal B (G)$ is a Krull monoid with class group isomorphic to $G$ and each class contains precisely one prime divisor (\cite[Proposition 2.5.6]{Ge-HK06a}).

The following proposition gathers together results demonstrating the universal role of the Krull monoid $\mathcal B (G)$ in the study of sets of lengths.

\medskip
\begin{proposition} \label{2.1}~

\begin{enumerate}
\item If $H$ is a Krull monoid with class group $G$ such that each class contains a prime divisor, then $\mathcal L (H) = \mathcal L \big( \mathcal B (G) \big)$.

\item Let $\mathcal O$ be a holomorphy ring in a global field $K$, $A$ a central simple algebra over $K$, and $H$ a classical maximal $\mathcal O$-order of $A$ such that every stably free left $R$-ideal is free. Then $\mathcal L (H) = \mathcal L     \big( \mathcal B (G) \big)$, where  $G$ is a ray class group of $\mathcal O$ and hence finite abelian.

\item Let $H$ be a seminormal order in a holomorphy ring of a global field with principal order $\widehat H$ such that the natural map $\mathfrak X (\widehat H) \to \mathfrak X (H)$ is bijective and there is an isomorphism $\overline{\vartheta}\colon \mathcal{C}_v(H)\rightarrow \mathcal{C}_v(\widehat{H})$ between the $v$-class groups. Then $\mathcal L (H) = \mathcal L     \big( \mathcal B (G) \big)$, where $G = \mathcal{C}_v(H)$ is  finite abelian.
\end{enumerate}
\end{proposition}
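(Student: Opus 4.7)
The unifying strategy for all three parts is to exhibit a \emph{transfer homomorphism} $\theta \colon H \to \mathcal{B}(G)$, that is, a monoid homomorphism satisfying (T1) $\theta^{-1}(\mathcal{B}(G)^{\times})=H^{\times}$ and $\theta(H)\mathcal{B}(G)^{\times}=\mathcal{B}(G)$, and (T2) whenever $\theta(a)=bc$ in $\mathcal{B}(G)$ we can write $a = a_1 a_2$ in $H$ with $\theta(a_i)$ associate to $b,c$ respectively. A routine verification shows that such a map sends any factorization of $a$ into atoms to a factorization of $\theta(a)$ of the same length, and every factorization of $\theta(a)$ lifts to one of $a$; consequently $\mathsf{L}_H(a) = \mathsf{L}_{\mathcal{B}(G)}(\theta(a))$ for every $a \in H$. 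Combined with $\mathsf{L}(1)=\{0\}$, this equality at the level of single elements, together with surjectivity of $\theta$ up to units, delivers the global identity $\mathcal{L}(H) = \mathcal{L}(\mathcal{B}(G))$.

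For part (1), I would make this construction explicit. Let $\varphi \colon H \hookrightarrow \mathcal{F}(P)$ be the divisor theory associated to the Krull monoid, where $P$ may be identified with the set of prime divisors of $H$. Writing $[p] \in G$ for the class of $p$, the assignment $p \mapsto [p]$ extends to a monoid homomorphism $\widetilde{\beta} \colon \mathcal{F}(P) \to \mathcal{F}(G)$, and because $\varphi(H)$ is precisely the kernel of the induced map $\mathcal{F}(P)\to G$, the composition $\beta := \widetilde{\beta}\circ\varphi$ takes values in $\mathcal{B}(G)$. Surjectivity onto $\mathcal{B}(G)$ (modulo units) is exactly where the hypothesis that each class contains a prime divisor enters: any zero-sum sequence $g_1 \cdots g_\ell$ can be lifted by picking a prime divisor $p_i$ in each class $g_i$, and $p_1 \cdots p_\ell$ then lies in $\varphi(H)$. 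Properties (T1) and (T2) for $\beta$ follow from the definition of a divisor theory: the lifting of a factorization in $\mathcal{B}(G)$ exploits that $\varphi(H)$ is saturated in $\mathcal{F}(P)$.

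Parts (2) and (3) would be established by invoking the corresponding non-commutative and seminormal analogues of this construction rather than reproving them from scratch. For (2), the work of Smertnig (which the paper references) shows that under the hypothesis on stably free left $R$-ideals, the monoid of cancellative elements of the classical maximal $\mathcal{O}$-order $H$ admits a transfer homomorphism to $\mathcal{B}(G)$ where $G$ is the appropriate ray class group of $\mathcal{O}$; finiteness of $G$ follows from class field theory. For (3), the seminormality hypothesis together with the bijection $\mathfrak{X}(\widehat{H}) \to \mathfrak{X}(H)$ and the isomorphism $\overline{\vartheta}$ between $v$-class groups permit one to transport factorizations between $H$ and $\widehat{H}$ prime by prime, producing a transfer homomorphism to $\mathcal{B}(\mathcal{C}_v(H))$; this is the content of the corresponding theorem for seminormal orders. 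In both cases the reduction to $\mathcal{B}(G)$ is the substantive input, after which the general preservation lemma for transfer homomorphisms applied at the level of sets of lengths finishes the argument.

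The essential obstacle is therefore not combinatorial but structural: verifying the transfer homomorphism property in the two non-commutative/non-integrally-closed settings. Once that is granted (via the cited results of Smertnig and of the seminormal case), the rest of the proof is an immediate application of the sets-of-lengths invariance of transfer homomorphisms, identical in form to the elementary argument given for part~(1).
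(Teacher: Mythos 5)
Your proposal is correct and follows essentially the same route as the paper: the paper proves all three parts by citation (to \cite[Section 3.4]{Ge-HK06a}, \cite[Theorem 1.1]{Sm13a}, and \cite[Theorem 5.8]{Ge-Ka-Re15a}), and the transfer-homomorphism argument you spell out for part (1), together with the appeal to Smertnig's theorem and to the seminormal weakly Krull result for parts (2) and (3), is precisely the content of those references. You have simply unpacked the standard machinery that the paper leaves implicit.
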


\begin{proof}
1. See \cite[Section 3.4]{Ge-HK06a}.

2. See \cite[Theorem 1.1]{Sm13a}, and \cite{Ba-Sm15} for related results of this flavor.

3. See \cite[Theorem 5.8]{Ge-Ka-Re15a} for a more general result in the setting of weakly Krull monoids.
\end{proof}

\smallskip
Statements 2 and 3 say  that the systems of sets of lengths of the monoids under consideration coincide with the system of sets of lengths of a Krull monoid as in Theorem \ref{1.1}, and hence we know when they are additively closed. Without going into details, we would like to mention that the same is true for certain non-commutative Krull monoids (\cite{Ge13a}). Furthermore, Frisch \cite{Fr13a} showed that, for the domain $R$ of integer-valued polynomials over the integers, we have $\mathcal L (R) = \mathcal L \big( \mathcal B (G) \big)$ for an infinite group $G$.

We end this section by highlighting a surprising consequence of when the system of sets of lengths of a domain is additively closed.

\medskip
\begin{proposition} \label{2.2}
Let $R$ be an atomic domain, let $n \ge 2$ be an integer, and let $T_n (R)$ be the semigroup of upper triangular matrices with nonzero determinant. Then $\mathcal L (R) \subset \mathcal L \big( T_n (R) \big)$, and equality holds if and only if $\mathcal L (R)$ is additively closed.
\end{proposition}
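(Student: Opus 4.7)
My plan rests on the following structural fact about factorizations in the semigroup of upper triangular matrices over an atomic domain: for every $M \in T_n(R)$ with diagonal entries $a_1, \ldots, a_n \in R^{\bullet}$, one has
\[
\mathsf L_{T_n(R)}(M) \;=\; \mathsf L_R(a_1) + \mathsf L_R(a_2) + \cdots + \mathsf L_R(a_n),
\]
so that the set of lengths of $M$ in $T_n(R)$ depends only on the diagonal of $M$ and is precisely the sumset of the sets of lengths of its diagonal entries. A description of $\mathcal L(T_n(R))$ of this shape is already present in the literature on factorizations of triangular matrices (the self-idealization computation of [Ch-Sm13a] cited above is essentially the $n=2$ prototype). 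Granting this formula, the three assertions of the proposition reduce to easy applications.

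First, for the inclusion $\mathcal L(R) \subset \mathcal L(T_n(R))$, given $a \in R^{\bullet}$ take $M_a = \mathrm{diag}(a, 1, 1, \ldots, 1) \in T_n(R)$: since $\mathsf L(1) = \{0\}$, the formula gives $\mathsf L_{T_n(R)}(M_a) = \mathsf L_R(a)$. Next, if $\mathcal L(R)$ is additively closed, then for any $M \in T_n(R)$ with diagonal $(a_1, \ldots, a_n)$ iterated additive closure yields $\mathsf L(M) = \mathsf L(a_1) + \cdots + \mathsf L(a_n) \in \mathcal L(R)$; combined with the first inclusion this proves equality. Conversely, if $\mathcal L(R) = \mathcal L(T_n(R))$, then given $L = \mathsf L(a)$ and $L' = \mathsf L(b)$ in $\mathcal L(R)$, the matrix $M = \mathrm{diag}(a, b, 1, \ldots, 1)$, which exists because $n \ge 2$, satisfies $\mathsf L_{T_n(R)}(M) = L + L' \in \mathcal L(T_n(R)) = \mathcal L(R)$, so $\mathcal L(R)$ is additively closed.

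The substantive obstacle is of course the key formula itself. The easy inclusion $\mathsf L(a_1) + \cdots + \mathsf L(a_n) \subset \mathsf L(M)$ is obtained by lifting a factorization of each diagonal entry $a_i$ into atoms of $R$ to a diagonal factorization of $\mathrm{diag}(a_1,\ldots,a_n)$, and then conjugating by elementary unipotent upper triangular matrices to absorb the above-diagonal entries of $M$ without changing the length. The reverse inclusion is the real content: one analyzes the atoms of $T_n(R)$ and shows that each atom has exactly one diagonal entry lying over an atom of $R$, with all other diagonal entries being units of $R$. Consequently, in any factorization $M = N_1 \cdots N_k$ the atoms partition according to which diagonal slot they ``act on'', and the total length decomposes as $k = k_1 + \cdots + k_n$ with $k_i \in \mathsf L(a_i)$, giving the opposite inclusion.
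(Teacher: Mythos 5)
Your argument is, in substance, the paper's own: the proof in the paper also reduces everything to the single formula $\mathsf L_{T_n(R)}(M)=\mathsf L_R(a_1)+\cdots+\mathsf L_R(a_n)$ for $M$ with diagonal $(a_1,\ldots,a_n)$, importing it from \cite[Theorem 4.2]{Ba-Ba-Go14} (that, rather than the self-idealization computation of \cite{Ch-Sm13a} --- which concerns the commutative submonoid of matrices $\left(\begin{smallmatrix} a & b \\ 0 & a\end{smallmatrix}\right)$ --- is the reference you want), and then deduces the three assertions exactly as you do. One caution about your sketch of the formula itself: the claim that one can conjugate by unipotent upper triangular matrices to absorb the off-diagonal entries is false in general --- for instance $\left(\begin{smallmatrix} 2 & 1 \\ 0 & 2 \end{smallmatrix}\right)$ over $\Z$ is fixed by such conjugation and is not associated to $\mathrm{diag}(2,2)$, since $1\notin 2\Z+2\Z$ --- and for $n\ge 3$ a non-unit middle diagonal entry cannot simply be split off from its row or column, so both inclusions require the genuinely nontrivial work done in \cite{Ba-Ba-Go14}. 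Since you, like the paper, ultimately lean on the literature for the formula, this does not invalidate your proof, but the sketch as written would not serve as a replacement for that citation.
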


\begin{proof}
Let $H = R^{\bullet}$ denote the monoid of nonzero elements of $R$.
Then \cite[Theorem 4.2]{Ba-Ba-Go14} implies that $\mathcal L \big( T_n (H) \big)$ coincides with the system of sets of lengths of the $n$-fold direct product of $H$. Therefore
\[
\mathcal L \big( T_n (H) \big) = \mathcal L (H \times \ldots \times H) =  \big\{ L_1 + \ldots + L_n \mid L_1, \ldots, L_n \in \mathcal L (H) \big\} \,,
\]
and thus the assertion follows.
\end{proof}

\bigskip
\section{Proof of Theorem \ref{1.1}} \label{3}
\bigskip

Let $G$ be an additively written finite abelian group. Then $G \cong C_{n_1} \oplus \ldots \oplus C_{n_r}$ with $1 < n_1 \t \ldots \t n_r$, where $r = \mathsf r (G) \in \N_0$ is the rank of $G$ and $n_r = \exp (G)$ is the exponent of $G$. A tuple of elements $(e_1, \ldots , e_s) \in G^s$, with $s \in \N$,  is said to be independent if $e_1, \ldots, e_s$ are non-zero and $\langle e_1, \ldots, e_s \rangle = \langle e_1 \rangle \oplus \ldots \oplus \langle e_s \rangle$. Furthermore,  $(e_1, \ldots , e_s)$ is said to be a basis of $G$ if it is independent and
$\langle e_1, \ldots, e_s \rangle = G$.

We gather the necessary concepts describing the arithmetic of  monoids of zero-sum sequences (for details and proofs, we refer to \cite{Ge-HK06a, Ge09a}).  Let $G_0 \subset G$ be a subset. Then $\mathcal B (G_0) = \mathcal B (G) \cap \mathcal F (G_0)$ denotes the submonoid of zero-sum sequences over $G_0$.  An atom of $\mathcal B (G_0)$ is a minimal zero-sum sequence over $G_0$, and we denote by $\mathcal A (G_0)$  the set of atoms of $\mathcal B (G_0)$. A sequence $S = g_1 \cdot \ldots \cdot g_l \in \mathcal F (G_0)$ is a (minimal) zero-sum sequence if and only if $-S = (-g_1) \cdot \ldots \cdot (-g_l)$ is a (minimal) zero-sum sequence. The set $\mathcal A (G_0)$ is finite and
\[
\mathsf D (G_0) = \max \{ |U| \mid U \in \mathcal A (G_0) \} \in \N
\]
is the {\it Davenport constant} of $G_0$. It is easy to see that $1 + \sum_{i=1}^r (n_i-1) \le \mathsf D (G)$. We will use without further mention that equality holds for $p$-groups and for  groups with rank $\mathsf r (G) \le 2$ (\cite[Chapter 5]{Ge-HK06a}).

\smallskip
\noindent
{\bf Factorization sets and sets of lengths.}
Let $\mathsf Z (G_0) = \mathcal F ( \mathcal A (G_0))$ denote the factorization monoid of $\mathcal B (G_0)$ (thus, $\mathsf Z (G_0)$ is the monoid of formal products of minimal zero-sum sequences over $G_0$), and let $\pi \colon \mathsf Z (G_0) \to \mathcal B (G_0)$ denote the canonical epimorphism. For $A \in \mathcal B (G_0)$, $\mathsf Z (A) = \pi^{-1} (A) \subset \mathsf Z (G_0)$ is the set of factorizations of $A$. For a factorization $z \in \mathsf Z (A)$, we call $|z| \in \N_0$ the length of $z$ and $\mathsf L (A) = \{|z| \mid z \in \mathsf Z (A)\} \subset \N_0$ is the set of lengths of $A$. Clearly, this coincides with the former informal definition. In particular, $\mathsf L (A)=\{0\}$ if and only if $A=1$, and $\mathsf L (A)=\{1\}$ if and only if $A\in \mathcal A (G_0)$. Furthermore,
\[
\mathcal L (G_0) := \mathcal L \big( \mathcal B (G_0) \big) = \{ \mathsf L (B) \mid B \in \mathcal B (G_0) \}
\]
is the system of sets of lengths of $\mathcal B (G_0)$. If $z, z' \in \mathsf Z (G_0)$ are two factorizations, say
\[
z = U_1 \cdot \ldots \cdot U_lV_1 \cdot \ldots \cdot V_m \quad \text{and} \quad z' = U_1 \cdot \ldots \cdot U_lW_1 \cdot \ldots \cdot W_n \,,
\]
where $l,m,n \in \N_0$, and all $U_i, V_j, W_k \in \mathcal A (G_0)$ with $\{V_1, \ldots, V_m\} \cap \{W_1, \ldots, W_n\} = \emptyset$, then $\mathsf d (z, z') = \max \{m,n\} \in \N_0$ is the distance between $z$ and $z'$. The distance function $\mathsf d \colon \mathsf Z (G_0) \times \mathsf Z (G_0) \to \N_0$ has the usual properties of a metric.

\smallskip
\noindent
{\bf Elasticities.} Let $|G|\ge 3$.
For $k \in \N$, we define
\[
\rho_k (G) = \max \{ \max L \mid k \in L \in \mathcal L (G) \}
\]
and recall that \cite[Section 6.3]{Ge-HK06a})
\[
\rho_{2k} (G) = k \mathsf D (G) \ , \quad  \quad 1 + k \mathsf D (G) \le \rho_{2k+1} (G) \le k \mathsf D (G) + \Big\lfloor \frac{\mathsf D (G)}{2} \Big\rfloor \,,
\]
and that
\[
\rho (G) = \max \Big\{ \frac{\max L}{\min L } \mid L \in \mathcal L (G) \Big\} = \lim_{k \to \infty} \frac{\rho_k (G)}{k} = \frac{\mathsf D (G)}{2} \,.
\]
Moreover, for  $A  \in \mathcal B (G)$, the following statements are equivalent:
\begin{itemize}
\item $\frac{\max \mathsf L (A)}{\min \mathsf L (A)} =  \frac{\mathsf D (G)}{2}$.
\item $A = (-U_1)U_1 \cdot \ldots \cdot (-U_j)U_j$ with $j \in \N$, $U_i \in \mathcal A (G)$ and $|U_i| = \mathsf D (G)$ for $i \in [1,j]$ (in which case $2j = \min \mathsf L (A)$).
\end{itemize}

\noindent
{\bf Catenary degrees.} The catenary degree $\mathsf c (A)$ of an element $A \in \mathcal B (G_0)$ is the smallest $N \in \N_0$ such that, for any two factorizations $z, z' \in \mathsf Z (A)$, there exist factorizations $z=z_0, z_1, \ldots, z_k=z'$ of $A$ such that $\mathsf d (z_{i-1}, z_i) \le N$ for each $i \in [1, k]$. Then
\[
\mathsf c (G_0) = \sup \{ \mathsf c (A) \mid A \in \mathcal B (G_0) \}
\]
denotes the catenary degree of $G_0$. It is easy to show that $\mathsf c (A) \le \max \mathsf L (A)$ and that $\mathsf c (G_0) \le \mathsf D (G_0)$.

\noindent
{\bf Sets of distances.} The set
\[
\Delta (G_0) = \bigcup_{L \in \mathcal L (G_0)} \Delta (L)
\]
is the {\it set of distances} of $\mathcal B (G_0)$. It is easy to verify that, for distinct $z,z' \in \mathsf Z (A)$, one has $d(z,z') \ge 2 + |  (|z| - |z'| ) |$. In particular, $|\mathsf Z (A)| \ge 2$ implies $2 + \max \Delta (\mathsf L (A)) \le \mathsf c (A)$, and if $\mathcal B (G_0)$ is not factorial, then $2+\max \Delta (G_0) \le \mathsf c (G_0)$. We will  further need that $\min \Delta (G_0) = \gcd \Delta (G_0)$,  and we call
\[
\Delta^* (G) = \{ \min \Delta (G_1) \mid G_1 \subset G \ \text{with} \ \Delta (G_1) \ne \emptyset \} \subset \Delta (G)
\]
the {\it set of minimal distances} of $\mathcal B (G)$. We denote by $\Delta_1 (G)$ the set of all $d \in \N$ with the following property:
\begin{itemize}
\item[]
For every $k \in \N$ there is an $L \in \mathcal L (G)$ having the following form: $L = L' \cup \{y+\nu d \mid \nu \in [0,l]\} \cup L''$, where $l \ge k$, and $L'$ and $L''$ are subsets of $L$ with $\max L' < y$ and $y+ld < \min L''$.
\end{itemize}
The relevance of the sets $\Delta^* (G)$ and $\Delta_1 (G)$ stems from their occurrence in the structure theorem for sets of lengths (see Proposition \ref{3.1} below), and it will play a crucial role in the proof of Theorem \ref{1.1}. Let $d \in \N$, \ $M \in \N_0$ \ and \ $\{0,d\} \subset \mathcal D
\subset [0,d]$. A subset $L \subset \Z$ is called an {\it almost
arithmetical multiprogression} \ ({\rm AAMP} \ for
      short) \ with \ {\it difference} \ $d$, \ {\it period} \ $\mathcal D$,
      \  and \ {\it bound} \ $M$, \ if
\[
L = y + (L' \cup L^* \cup L'') \, \subset \, y + \mathcal D + d \Z
\]
where \ $y \in \mathbb Z$ is a shift parameter,
\begin{itemize}
\item  $L^*$ is finite nonempty with $\min L^* = 0$ and $L^* =
       (\mathcal D + d \Z) \cap [0, \max L^*]$, and

\item  $L' \subset [-M, -1]$ \ and \ $L'' \subset \max L^* + [1,M]$.
\end{itemize}

\medskip
\begin{proposition} \label{3.1}
Let $G$ be a finite abelian group.
\begin{enumerate}
\item There is a constant $M \in \N_0$ such that each $L \in \mathcal L (G)$ is an {\rm AAMP}  with difference $d \in \Delta^* (G)$ and bound $M$.

\smallskip
\item $\Delta^* (G) \subset \Delta_1 (G) \subset \{ d_1 \in \Delta (G) \mid d_1 \ \text{divides some} \ d \in \Delta^* (G) \}$.

\smallskip
\item $\max \Delta^* (G) = \max \{ \exp (G)-2, \mathsf r (G)-1 \}$.
\end{enumerate}
\end{proposition}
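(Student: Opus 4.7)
My plan is to address the three parts separately, since they draw on quite different machinery.

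For (1), this is the Structure Theorem for Sets of Lengths in $\mathcal B (G)$ with $G$ finite abelian, proved in full generality in Chapter~4 of \cite{Ge-HK06a}. I would simply cite it. Its proof uses the finiteness of $\Delta (G)$, a Kneser-style analysis of differences between consecutive factorization lengths, and a pattern argument isolating the generic AP skeleton $L^*$ of a set of lengths from bounded-size tails $L'$ and $L''$; the common difference is forced into $\Delta^* (G)$ by identifying, for each long stretch, the subset $G_1 \subset G$ responsible for producing it.

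For (2), I argue directly. Given $d \in \Delta^* (G)$, choose $G_1 \subset G$ with $\min \Delta (G_1) = d$ and pick $B \in \mathcal B (G_1)$ with two consecutive lengths $\{a, a+d\} \subset \mathsf L (B)$. Since sums of sets of lengths embed in the set of lengths of products, $\mathsf L (B^k) \supset k \mathsf L (B) \supset \{ka + \nu d \mid \nu \in [0,k]\}$ for every $k$, producing arbitrarily long APs of difference $d$ in $\mathcal L (G)$, hence $d \in \Delta_1 (G)$. Conversely, let $d_1 \in \Delta_1 (G)$ and let $M$ be the bound from (1). Choose $k$ large enough that $k d_1 > 2M$; by definition of $\Delta_1 (G)$ there is a corresponding $L$ containing an AP of difference $d_1$ and length $\ge k$. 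By (1), $L$ is an AAMP with some difference $d \in \Delta^* (G)$ and bound $M$, so a stretch of the $d_1$-AP extends past both tails and lies inside a single coset $y + \mathcal D + d \Z$, forcing $d_1 \mid d$.

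For (3), the lower bound follows from two explicit constructions. Pick $g \in G$ of order $\exp (G) = n \ge 3$ and set $G_1 = \{g, -g\}$; the atoms are $U = g^n$, $V = (-g)^n$, and $W = g \cdot (-g)$, and a direct computation yields $\mathsf L (U^a V^b) = \{a+b + (n-2)x \mid x \in [0, \min (a,b)]\}$, giving $\min \Delta (G_1) = n-2$. For the rank bound, take a basis $(e_1, \ldots, e_r)$ of $G$ (or of a suitable rank-$r$ subgroup), set $e_0 = -(e_1 + \ldots + e_r)$ and $G_1 = \{e_0, e_1, \ldots, e_r\}$; comparing the canonical factorization through $U_0 = e_0 e_1 \cdots e_r$ against one built from the atoms $e_i^{\ord (e_i)}$ produces an element whose set of lengths has minimal distance $r-1$. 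The matching upper bound $\max \Delta^* (G) \le \max \{\exp (G) - 2, \mathsf r (G) - 1\}$ is the genuinely hard part and the principal obstacle: it is proved in the recent literature alluded to at the start of Section~\ref{3} by a delicate case analysis of subsets $G_1 \subset G$, splitting on the rank of $\langle G_1 \rangle$ versus the maximal order of elements of $G_1$, and I would invoke it as a black box rather than reprove it here.
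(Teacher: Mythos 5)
The paper's own ``proof'' of this proposition is nothing but the citations \cite[Corollary 4.3.16, Section 4.7]{Ge-HK06a} and \cite{Ge-Zh15a}, so your deferral to the Structure Theorem for part (1) and to the recent literature for the upper bound in part (3) is exactly what the authors do, and your description of what those sources contain is accurate. Your argument for the second inclusion in (2) (a long enough $d_1$-progression must survive both bounded tails and sit inside the periodic core $y+\mathcal D+d\Z$, forcing $d_1\mid d$) and your $\{g,-g\}$ computation giving $\exp(G)-2\in\Delta^*(G)$ are both correct.

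Two of the steps you argue directly have genuine gaps. First, in $\Delta^*(G)\subset\Delta_1(G)$: membership in $\Delta_1(G)$ does not merely require that some $L\in\mathcal L(G)$ \emph{contain} a long arithmetic progression of difference $d$; by the definition in Section \ref{3}, $L$ restricted to the window $[y,y+ld]$ must equal that progression exactly. The containment $\mathsf L(B^k)\supset k\,\mathsf L(B)$ does not exclude extraneous lengths interleaved in the window. The repair is to stay inside $\mathcal B(G_1)$ and invoke the fact, recorded in the paper, that $\min\Delta(G_1)=\gcd\Delta(G_1)=d$: then every distance of $\mathsf L(B^k)$ is a multiple of $d$, so every element of $\mathsf L(B^k)$ in the window is congruent to $ka$ modulo $d$ and already lies on the progression. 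Second, your construction for the lower bound $\mathsf r(G)-1\le\max\Delta^*(G)$ fails for odd characteristic. With $e_1,\dots,e_r$ independent of prime order $p$ and $e_0=-(e_1+\dots+e_r)$, the set $G_1=\{e_0,\dots,e_r\}$ has atoms exactly $e_0^p,\dots,e_r^p,U_0$ and the single basic relation $U_0^p=\prod_{i=0}^r e_i^p$, which yields $\min\Delta(G_1)=|r+1-p|$; this equals $r-1$ only for $p=2$. For $C_3^3$, where the rank term is the one that matters, your $G_1$ gives minimal distance $1$, not $2$. A correct choice is $e_0=e_1+\dots+e_r$ together with the pair of atoms $e_0^{p-1}e_1\cdots e_r$ and $e_0e_1^{p-1}\cdots e_r^{p-1}$, whose product factors as $\prod_{i=0}^r e_i^p$ and produces the distance $(r+1)-2=r-1$; checking that no smaller distance arises over this $G_1$ is the content of the relevant results in \cite[Section 6.8]{Ge-HK06a}.
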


\begin{proof}
See \cite[Corollary 4.3.16, Section 4.7]{Ge-HK06a} and \cite{Ge-Zh15a}.
\end{proof}

Note that the description in 1. is best possible by the realization theorem in \cite{Sc09a}.

\smallskip
The proof of Theorem \ref{1.1} is based on (all parts of) Proposition \ref{3.1}. We proceed in a series of propositions. The generic case is handled at the very end (in Proposition \ref{3.9}). The key idea is as follows. We choose a $d_0$ such that $L=\{2, 2+d_0\} \in \mathcal L (G)$. If $\mathcal L (G)$ would be additively closed, then the $k$-fold sumset of $L$ is in $\mathcal L (G)$ and hence $d \in \Delta_1 (G)$. Comparing the maxima of $\Delta (G)$, $\Delta_1 (G)$, and $\Delta^* (G)$, we obtain  a contradiction. Unfortunately, $\max \Delta (G)$ is known only in very special cases (even $\max \Delta (C_n \oplus C_n)$ is unknown). If $G$ is an elementary $2$-group, then $\Delta (G) = \Delta^* (G)$. Thus elementary $2$-groups need some extra care, and the same is true for elementary $3$-groups. We start with an already known case, then we handle two special groups, and after that study elementary $2$-groups (Proposition \ref{prop_el2}) and elementary $3$-groups (Proposition \ref{3.8}).

\medskip
\begin{proposition} \label{3.2}
Suppose that $G$ is cyclic. Then $\mathcal L (G)$ is additively closed if and only if $|G| \le 4$.
\end{proposition}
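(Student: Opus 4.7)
My proof plan for Proposition \ref{3.2} proceeds in two stages, treating the two directions of the equivalence.

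For the ``if'' direction, when $|G| \le 4$ I would invoke the explicit description of $\mathcal L (G)$ given in \cite[Theorem 7.3.2]{Ge-HK06a}. If $|G| \le 2$, then $\mathcal B (G)$ is free abelian, so $\mathcal L (G) = \{\{y\} : y \in \N_0\}$ is trivially closed under sumset. For $G = C_3$, every set of lengths is an interval $[a,b] \subset \N_0$, and the sumset of two intervals is an interval. For $G = C_4$, every set of lengths is either a singleton, a full interval, or an arithmetic progression with common difference $2$; a short case check (interval $+$ interval, interval $+$ AP with difference $2$, AP $+$ AP) shows the sumset stays within this family. In all these cases $\mathcal L (G)$ is additively closed.

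For the ``only if'' direction, let $G = C_n$ with $n \ge 5$ and fix a generator $g$ of $C_n$. My plan is to produce $L_1, L_2 \in \mathcal L (C_n)$ whose sumset is not in $\mathcal L (C_n)$. I would take
\[
L_1 = \mathsf L \bigl( g^n \cdot (-g)^n \bigr) = \{2, n\}
\qquad \text{and} \qquad
L_2 = \mathsf L \bigl( 1^2 \cdot 2 \cdot (n-2) \cdot (n-1)^2 \bigr) = \{2, 3\},
\]
where the only factorizations of the second element are $\bigl( 1^2 \cdot (n-2) \bigr)\bigl( (n-1)^2 \cdot 2 \bigr)$ of length $2$ and $\bigl(1 \cdot (n-1)\bigr)^2 \bigl( 2 \cdot (n-2) \bigr)$ of length $3$ (no $0$ appears, and the total of $6$ generators forbids any factorization of length $\ge 4$). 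Then $L_1 + L_2 = \{4, 5, n+2, n+3\}$, and I would argue that no $B \in \mathcal B (C_n)$ has $\mathsf L (B)$ equal to this set. The argument combines (a) the AAMP template of Proposition \ref{3.1}(1), constraining $\mathsf L (B)$ to have difference $d \in \Delta^* (C_n)$ and a universally bounded correction term; (b) the catenary-degree bound $\mathsf c (C_n) \le n$, providing chains of factorizations whose lengths lie in $\mathsf L (B)$; and (c) a hands-on inventory of minimal zero-sum sequences in $\mathcal B (C_n)$ forcing some length in $[6, n+1]$ to appear in $\mathsf L(B)$.

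The principal obstacle is that for cyclic groups $\max \Delta (C_n) = \max \Delta^* (C_n) = n - 2$, so the clean comparison-of-maxima strategy sketched after Proposition \ref{3.1} yields no contradiction here, and one must carry out the atom-level case analysis carefully. Small cases ($n = 5, 6$) reduce to finite but nontrivial computations, and a uniform treatment across all $n \ge 5$ is delicate. My back-up, should the uniform argument prove stubborn, is to cite the detailed study of $\mathcal L (C_n)$ in \cite{Ba-Ge14b}, where the failure of additive closure for $n \ge 5$ is established as part of the broader classification motivating the present work.
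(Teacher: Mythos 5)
Your fallback is in fact the paper's entire proof: the authors dispose of Proposition \ref{3.2} by citing \cite[Proposition 6.14]{Ba-Ge14b}, so the citation route is legitimate and identical to theirs. The interest of your proposal therefore lies entirely in the primary, self-contained argument, and there the ``only if'' direction has a genuine gap. Your constructions of $L_1=\{2,n\}$ and $L_2=\{2,3\}$ are correct (the factorization counts check out), but the claim that $\{4,5,n+2,n+3\}\notin\mathcal L(C_n)$ is exactly the hard part, and none of the three tools you name will deliver it as stated. Concerning (a): the set $\{4,5,n+2,n+3\}$ \emph{is} an AAMP with difference $d=n-2\in\Delta^*(C_n)$, period $\mathcal D=\{0,1,n-2\}$, shift $y=4$, and bound $M=0$, so Proposition \ref{3.1}(1) imposes no obstruction whatsoever; likewise the $\Delta_1$ machinery requires arbitrarily long arithmetical progressions and says nothing about a single four-element set. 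Concerning (b): the catenary bound only yields $\max\Delta(\mathsf L(B))\le\mathsf c(C_n)-2\le n-2$, which the gap of length $n-3$ comfortably satisfies. Concerning (c): the ``hands-on inventory'' is where all the work lives, and it is not carried out; note also that the elasticity characterization recalled in Section \ref{2} (used in Lemmas \ref{3.3} and \ref{3.4} to pin down the support of a hypothetical $B$) does not apply here, since $\max(L_1+L_2)/\min(L_1+L_2)=(n+3)/4<n/2=\rho(C_n)$ for $n\ge 4$, so you get no structural control over $B$ to even begin the case analysis.

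The ``if'' direction is essentially fine: for $|G|\le 2$ the system is $\{\{y\}\mid y\in\N_0\}$, and for $C_3$ and $C_4$ the explicit descriptions in \cite[Theorem 7.3.2]{Ge-HK06a} (for $C_3$ this is the family $\{y+2k+[0,k]\mid y,k\in\N_0\}$ appearing in Proposition \ref{3.8}(1)) make the closure check routine, though you should verify that sums land back in the \emph{precise} parametrized families rather than merely remaining intervals or APs. To salvage the ``only if'' direction without citation you would need a different mechanism --- for instance exploiting a $k$-fold sumset so that the extremal-elasticity characterization or a counting argument on atom lengths (in the style of the proof of Proposition \ref{prop_el2}(4)) forces an extra length into $\mathsf L(B)$; as written, the decisive step is asserted, not proved.
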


\begin{proof}
See \cite[Proposition 6.14]{Ba-Ge14b}.
\end{proof}

\medskip
\begin{lemma} \label{3.3}
Let $G = C_2 \oplus C_4$. Then $\mathcal L (G)$ is not additively closed.
\end{lemma}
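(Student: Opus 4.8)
The plan is to show that $\mathcal{L}(G)$ for $G = C_2 \oplus C_4$ is not additively closed by exhibiting a concrete set of lengths $L \in \mathcal{L}(G)$ whose $k$-fold sumset $kL$ cannot occur as any set of lengths in $\mathcal{L}(G)$, thereby realizing the strategy outlined just before Proposition \ref{3.2}. I would first record the relevant numerical invariants of $G = C_2 \oplus C_4$: here $\exp(G) = 4$ and $\mathsf{r}(G) = 2$, so $\mathsf{D}(G) = 1 + (2-1) + (4-1) = 5$, and by Proposition \ref{3.1}(3) we have $\max \Delta^*(G) = \max\{\exp(G)-2, \mathsf{r}(G)-1\} = \max\{2,1\} = 2$. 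The key consequence, via Proposition \ref{3.1}(2), is that every $d_1 \in \Delta_1(G)$ must divide some $d \in \Delta^*(G)$, and since $\max \Delta^*(G) = 2$, every element of $\Delta_1(G)$ is at most $2$.

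The heart of the argument is to produce an element $A \in \mathcal{B}(G)$ with $\mathsf{L}(A) = \{2, 2 + d_0\}$ for some $d_0$ with $d_0 > 2$, so that $d_0 \notin \Delta_1(G)$ by the divisibility bound above. I would look for such an $A$ of the form $A = (-U)U \cdots$ or, more directly, construct a zero-sum sequence whose only two factorizations have lengths $2$ and $2+d_0$. The natural candidate is to take a long atom $U$ of length $\mathsf{D}(G) = 5$ together with its negative: if $A = (-U)\,U$ with $|U| = 5$, then $\{2\} \subseteq \mathsf{L}(A)$ and one wants to arrange that $A$ also factors into short atoms giving a larger length, so that $\mathsf{L}(A) = \{2, 2+d_0\}$ with $d_0$ as large as possible. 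In $C_2 \oplus C_4$ one can write down an explicit maximal atom, for instance using a basis $(e_1, e_2)$ with $\ord(e_1) = 2$, $\ord(e_2) = 4$, such as $U = e_1 \, e_2^{\,2} \, (e_1 + e_2)\, (\text{suitable completion})$, and then compute the factorizations of $(-U)U$ by hand. The expected outcome is a set of lengths of the form $\{2, y\}$ with $y - 2 = 3$, giving a distance $d_0 = 3 > 2$.

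Once such an $L = \{2, 2+d_0\}$ with $d_0 \ge 3$ is in hand, the contradiction is immediate under the assumption that $\mathcal{L}(G)$ is additively closed: the $k$-fold sumset $kL = \{2k + \nu d_0 \mid \nu \in [0,k]\}$ would then lie in $\mathcal{L}(G)$ for every $k \in \N$. This is precisely a long arithmetical progression with difference $d_0$ and no extra points, so by the definition of $\Delta_1(G)$ (taking $L' = L'' = \emptyset$, $y = 2k$, $l = k$) we would get $d_0 \in \Delta_1(G)$. But $d_0 \ge 3$ cannot divide any element of $\Delta^*(G)$, whose maximum is $2$, contradicting Proposition \ref{3.1}(2). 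Hence $\mathcal{L}(G)$ is not additively closed.

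The main obstacle I anticipate is the explicit construction and verification of the atom $U$ of length $5$ and the computation of all factorizations of $(-U)U$, ensuring that the resulting set of lengths is exactly $\{2, 2+d_0\}$ with $d_0 \ge 3$ (and in particular that no intermediate lengths appear, and that the distance strictly exceeds $2$). This is a finite but delicate combinatorial check in a specific small group; care is needed to confirm that the short factorization genuinely uses $2 + d_0$ atoms and that no factorization of intermediate length exists. An alternative, should a single $(-U)U$ not suffice, is to use the elasticity characterization recorded in the excerpt: the element $A = (-U_1)U_1 \cdots (-U_j)U_j$ with each $|U_i| = \mathsf{D}(G) = 5$ realizes the extremal ratio $\rho(G) = \mathsf{D}(G)/2 = 5/2$, and its set of lengths, or the set of lengths of a closely related element, can be analyzed to extract a distance exceeding $2$.
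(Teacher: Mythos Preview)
Your strategy has a genuine gap: the set $\{2,2+d_0\}$ with $d_0\ge 3$ that you need simply does not lie in $\mathcal L(C_2\oplus C_4)$. Since $\rho(G)=\mathsf D(G)/2=5/2$, any $L\in\mathcal L(G)$ with $\min L=2$ satisfies $\max L\le 5$, so the only candidate is $d_0=3$, i.e.\ $L=\{2,5\}$. But then $\max L/\min L=\rho(G)$, and by the elasticity characterization recalled in the paper the element $A$ realizing this set must be $A=(-U)U$ for some atom $U$ with $|U|=5$. Every such atom has the form $U=e_2^{3}e_1(e_1+e_2)$ for a suitable basis, and a direct check (for instance via the factorization $[(e_1+e_2)(-e_2)e_1]\cdot[(e_1-e_2)e_2e_1]\cdot[e_2(-e_2)]\cdot[e_2(-e_2)]$) shows $4\in\mathsf L\big((-U)U\big)$, so in fact $\mathsf L\big((-U)U\big)=\{2,4,5\}$. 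Hence $\{2,5\}\notin\mathcal L(G)$ and your $\Delta_1$--argument cannot get off the ground. This is not an accident: the whole reason $C_2\oplus C_4$ is treated separately (rather than in the generic Proposition~\ref{3.9}) is that here $\max\Delta^*(G)=2$ coincides with the largest distance one can force at the bottom of a length set, so the $\Delta_1$--route yields no contradiction.

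The paper's proof takes a different, more hands-on route. It starts from the set $L=\{2,4,5\}=\mathsf L\big((-U)U\big)$, which \emph{does} lie in $\mathcal L(G)$, and shows directly that the single sumset $L+L=\{4,6,7,8,9,10\}$ is not in $\mathcal L(G)$. Again the extremal ratio $10/4=\rho(G)$ forces any realizing element to have the shape $(-U)U(-V)V$ with $|U|=|V|=5$; one then lists the eight atoms of length $5$ explicitly and verifies in each case that $5\in\mathsf L\big((-U)U(-V)V\big)$, contradicting $5\notin L+L$. If you want to salvage your write-up, you should abandon the search for a two-element length set with gap $\ge 3$ and instead carry out this finite case analysis.
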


\begin{proof}
By \cite[page 411]{Ge-HK06a}, for every $U \in \mathcal A (G)$ of length $|U|=5$, there exist $(e_1, e_2) \in G^2$ with $\ord (e_1)=2$ and $\ord (e_4)=4$ such that $U = e_2^3 e_1 (e_1+e_2)$.
Considering $U(-U)$ for such a $U$, it follows that $L = \{2,4,5\} \in \mathcal L (G)$.

We assert that the sumset $L+L = L_2 = \{4, 6,7,8,9,10\} \notin \mathcal L (G)$, which implies that $\mathcal L (G)$ is not additively closed.

We have $\mathsf D (G) = 5$ and  $\rho (G) = 5/2$. Assume to the contrary that $L_2   \in \mathcal L (G)$. Since $\max L_2 / \min L_2=5/2$ and by a result recalled in Section \ref{2}, there exist minimal zero-sum sequences $U, V \in \mathcal A (G)$ with $|U| = |V| = 5$ such that
\[
\mathsf L \big( (-U)U (-V)V \big) = L_2 \,.
\]

Let $(e_1, e_2)$ as above be given and suppose that $U = e_2^3 e_1 (e_1+e_2)$. We go through all cases for $V$ and show that $5 \in \mathsf L \big( (-U)U (-V)V \big)$, which implies the wanted contradiction. Note that $\ord (2e_2)= \ord (e_1+2e_2) = \ord (e_1) = 2$ and that $\ord (e_2) = \ord (-e_2) = \ord (e_1+e_2)=\ord (e_1-e_2) = 4$. Therefore we have
\[
\begin{aligned}
\{V \in \mathcal A (G) \mid \, |V|=5\} = \{ \, V_1 & = e_2^3e_1(e_1+e_2), \, -V_1, \\
 V_2 & = e_2^3(e_1+2e_2)(e_1-e_2), \, -V_2, \\
 V_3 & = (e_1+e_2)^3 e_1 e_2, \, -V_3, \\
 V_4 & = (e_1+e_2)^3(e_1+2e_2)(-e_2), \, -V_4 \, \} \,.
\end{aligned}
\]
Since
\[
\begin{aligned}
(-U)U (-V_1)V_1 & = \Big( (e_1+e_2)^2 e_2^2 \Big) \Big(e_2^4 \Big) \Big(e_1^2 \Big) (-U) (-U) \,, \\
(-U)U (-V_2)V_2 & = \Big( e_2^4 \Big) \Big((e_1+e_2)(e_1+2e_2)e_2 \Big) \Big( e_1(e_1-e_2)e_2 \Big) (-U)(-V_2)  \,, \\
(-U)U (-V_3)V_3 & = \Big( (e_1+e_2)^4 \Big) \Big(e_2^4 \Big) \Big( e_1^2 \Big) (-U) (-V_3) \,, \ \text{and} \\
(-U)U (-V_4)V_4 & = \Big( (e_1+e_2)^4 \Big) \Big( (e_1+2e_2)e_2^2e_1 \Big) \Big( (-e_2)e_2 \Big) (-U) (-V_4) \,,  \\
\end{aligned}
\]
it follows that $5 \in \mathsf L \big( (-U)U (-V_{\nu})V_{\nu}  \big)$ for each $\nu \in [1, 4]$.
\end{proof}

\medskip
\begin{lemma} \label{3.4}
Let $G = C_5 \oplus C_5$. Then $\mathcal L (G)$ is not additively closed.
\end{lemma}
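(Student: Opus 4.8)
The plan is to follow the key idea announced just after Proposition \ref{3.1}: produce a set of lengths of the shape $\{2, 2+d_0\}$ with $d_0$ too large to lie in $\Delta_1(G)$, and then use additive closure to force $d_0 \in \Delta_1(G)$, a contradiction. First I would record the relevant numerical bound. Since $\exp(G) = 5$ and $\mathsf r(G) = 2$, Proposition \ref{3.1}(3) gives $\max \Delta^*(G) = \max\{3, 1\} = 3$, and Proposition \ref{3.1}(2) shows that every element of $\Delta_1(G)$ divides some element of $\Delta^*(G)$; hence $\max \Delta_1(G) \le 3$. Thus it suffices to exhibit a single set of lengths $\{2, 2+d_0\} \in \mathcal L(G)$ with $d_0 \ge 4$.

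For the construction I fix a basis $(e_1, e_2)$ of $G$, put $g = 2e_1 + 3e_2$, and set $U = e_1^3 e_2^2 g \in \mathcal B(G)$; a routine check of subsums shows that $U \in \mathcal A(G)$ with $|U| = 6$. I then consider $A = U(-U)$ and claim $\mathsf L(A) = \{2, 6\}$, so that $d_0 = 4$. The factorization $U \cdot (-U)$ gives $2 \in \mathsf L(A)$, and pairing each element of $U$ with its negative gives $6 \in \mathsf L(A)$; the substance of the claim is that there is nothing in between.

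The main obstacle is precisely this last point, and I would settle it by determining all atoms dividing $A$. Writing a zero-sum divisor of $A$ as $e_1^a e_2^b g^c (-e_1)^{a'}(-e_2)^{b'}(-g)^{c'}$ with $a, a' \in [0,3]$, $b, b' \in [0,2]$, and $c, c' \in [0,1]$, the vanishing of the $e_1$- and $e_2$-coordinates modulo $5$ yields two congruences; running through the four values of $(c,c')$ shows that the only atoms involved are $U$, $-U$, $e_1(-e_1)$, $e_2(-e_2)$, and $g(-g)$. Since $g$ and $-g$ occur in $A$ with multiplicity one, any factorization either uses the atom $g(-g)$ --- and then $U$ and $-U$ are unavailable, so the remaining $e_1^3 e_2^2 (-e_1)^3(-e_2)^2$ splits into the five atoms $e_1(-e_1)$ and $e_2(-e_2)$, giving length $6$ --- or it does not, in which case $g$ forces the use of $U$ and $-g$ forces the use of $-U$, giving length $2$. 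Hence $\mathsf L(A) = \{2,6\}$, as claimed.

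Finally I would close the argument. If $\mathcal L(G)$ were additively closed, then by induction $kL \in \mathcal L(G)$ for $L = \{2,6\}$ and every $k \in \N$, and $kL = \{2k + 4\nu \mid \nu \in [0,k]\}$ is an arithmetical progression with difference $4$ and $k+1$ terms. By the definition of $\Delta_1(G)$ (taking $y = 2k$, $l = k$, and $L' = L'' = \emptyset$) this would force $4 \in \Delta_1(G)$, contradicting $\max \Delta_1(G) \le 3$. Therefore $\mathcal L(G)$ is not additively closed.
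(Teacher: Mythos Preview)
Your case analysis in the third paragraph is incomplete, and the conclusion $\mathsf L(A) = \{2,6\}$ is false. In the case $(c,c') = (1,0)$ the congruence $a - a' \equiv 3 \pmod 5$ with $a, a' \in [0,3]$ admits not only $a - a' = 3$ but also $a - a' = -2$; taking $(a,a',b,b') = (0,2,2,0)$ yields the zero-sum sequence $V = (-e_1)^2 e_2^2 g$, and one checks directly that $V$ is minimal. Since $-V = e_1^2(-e_2)^2(-g)$ also divides $A$, we obtain
\[
A \ = \ V \cdot (-V) \cdot \big(e_1(-e_1)\big),
\]
a factorization of length $3$; in fact $\mathsf L(A) = \{2,3,6\}$. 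With $1 \in \Delta(\mathsf L(A))$ the $k$-fold sumsets no longer have constant difference $4$, and the intended contradiction with $\max \Delta_1(G) \le 3$ disappears.

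More broadly, the generic strategy sketched after Proposition~\ref{3.1} does not go through for $C_5 \oplus C_5$: the standard constructions (cf.\ the proof of Proposition~\ref{3.9}) only produce $\{2,d\} \in \mathcal L(G)$ for $d \in [3,5]$, which is precisely why this group is singled out for a separate lemma. The paper's proof does not look for a two-element set of lengths with a large gap. Instead it takes $U = e_1^4 e_2^4 (e_1+e_2)$ of maximal length $|U| = \mathsf D(G) = 9$, computes $\mathsf L\big((-U)U\big) = \{2,5,8,9\}$, and exploits the equality $\max L_k/\min L_k = \rho(G) = 9/2$ to force any element realizing the $k$-fold sumset $L_k$ to be of the form $\prod_\nu (-U_\nu)^{k_\nu}U_\nu^{k_\nu}$ with each $|U_\nu| = 9$. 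The contradiction then comes from the known structure of atoms of length $9$ in $C_5 \oplus C_5$, which yields $3 \in \mathsf L(U_\nu^2)$ and hence puts $2k+1$ into the set of lengths, whereas $\min(L_k \setminus \{2k\}) = 2k+3$.
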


\begin{proof}
Let $k \in \N$, $(e_1, e_2)$ be a basis of $G$, and $U = e_1^4 e_2^4 (e_1+e_2)$. Then $\mathsf L \big( (-U)U \big) = \{2,5,8,9\}$,  and we consider the $k$-fold sumset $L_k = L+ \ldots + L $. Clearly, $\min L_k = 2k$ and $\min ( L_k \setminus \{2k\}) = 2k+3$.
We assert that, for all sufficiently large $k$,  $L_k \notin \mathcal L (G)$ which implies that $\mathcal L (G)$ is not additively closed.

We have $\mathsf D (G) = 9$, $\rho (G) = 9/2$, and we set $\{U_1, -U_1, \ldots, U_s, -U_s\}  = \{ W \in \mathcal A (G) \mid |W| = 9\}$.  Let $k \in \N$ and suppose  that $L_k \in \mathcal L (G)$. Since $\max L_k/\min L_k = 9/2$ and by a result recalled in Section \ref{2},  there exist $k_1, \ldots, k_s \in \N_0$ with $k_1+ \ldots + k_s = k$ such that
\[
\mathsf L \big( (-U_1)^{k_1}U_1^{k_1}  \cdot \ldots \cdot (-U_s)^{k_s}U_s^{k_s} \big) = L_k \,.
\]
If $k$ is sufficiently large, then there is a $\nu \in [1, s]$ such that $k_{\nu} \ge 2$. We assert that $3 \in \mathsf L (U_{\nu}^2)$ for each $\nu \in [1, s]$. If this holds, then $2k+1 \in \mathsf L \big( (-U_1)^{k_1}U_1^{k_1}  \cdot \ldots \cdot (-U_s)^{k_s}U_s^{k_s} \big)$, a contradiction.

To prove the assertion, let $W \in \mathcal A (G)$ be of length $|W|=9$. By \cite[Proposition 4.2]{Ga-Ge03b} there exists a basis $(f_1, f_2)$ of $G$ such that
\[
W = f_1^4 (a_1 f_1 + f_2) (a_2 f_1 + f_2) (a_3 f_1 + f_2) (a_4 f_1 + f_2)(a_5 f_1 + f_2) \,,
\]
with $a_1, \ldots, a_5 \in [0, 4]$. Then $W^2 = \big( f_1^5 \big) S$ for some zero-sum sequence $S$ over $G$. Since $|S| = 13 > \mathsf D (G) = 9$, $S \notin \mathcal A (G)$. It follows immediately that $\mathsf L (S) = \{2\}$ and hence $3 \in \mathsf L (W^2)$.
\end{proof}

\medskip
We continue with elementary $2$-groups. Let $G=C_2^r$ with $r \ge 2$. It is well-known that $\Delta (G) = \Delta^* (G) = [1, r-1]$ (\cite[Corollary 6.8.3]{Ge-HK06a}). The next proposition summarizes our results for elementary $2$-groups.

\medskip
\begin{proposition} \label{prop_el2}
Let $G=C_2^r$ with $r \in \mathbb{N}$.
\begin{enumerate}
\item If $r=1$, then $\mathcal L (G) =  \ \big\{ \{y\} \mid y \in \N_0\big\}$. In particular, $\mathcal L (G)$ is additively closed.
\item If $r=2$, then $\mathcal L (G) = \bigl\{ y + 2k + [0,k] \, \bigm| \, y,\, k \in \N_0 \bigr\}$. In particular, $\mathcal L (G)$ is additively closed.
\item If $r=3$, then $\mathcal L (C_2^3)  =  \bigl\{ y + (k+1) + [0,k] \,
\bigm|\, y
      \in \N_0, \ k \in [0,2] \bigr\}$ \newline
      $\quad \text{\, } \ \qquad$ \quad $\cup \ \bigl\{ y + k + [0,k] \, \bigm|\, y \in \N_0, \ k \ge 3 \bigr\}
      \cup \bigl\{ y + 2k
      + 2 \cdot [0, k] \, \bigm|\, y ,\, k \in \N_0 \bigr\}$. In particular, $\mathcal L (G)$ is additively closed.
\item If $r \ge 4$, then $\mathcal{L}(G)$ is not additively closed.
\end{enumerate}
\end{proposition}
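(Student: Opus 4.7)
My plan is to verify (1) directly, to deduce (2) and (3) from the known explicit descriptions of $\mathcal L(C_2^r)$ for small $r$ (cf.\ \cite[Theorem~7.3.2]{Ge-HK06a}), and to prove (4) by exhibiting a sumset that does not lie in $\mathcal L(C_2^r)$. Case (1) is immediate, since $\mathcal B(C_2)$ is free on the single atom $g^2$ and all sets of lengths are singletons. For (2) and (3), closure reduces to a finite case inspection: one computes the sum of two sets of the listed forms and checks that it is again of one of those forms.

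For (4), the generic-case trick of picking $\{2,\,2+d\} \in \mathcal L(G)$ with $d$ too large for $\Delta_1(G)$ fails here because $\Delta(G) = \Delta^*(G)$, so I use a sumset $L_1+L_2$ with $L_1 \neq L_2$. Fix a basis $(e_1,\ldots,e_r)$, and set $w = e_1+\cdots+e_r$ and $w' = e_1+\cdots+e_{r-1}$; then $U = e_1\cdots e_r \, w$ and $V = e_1\cdots e_{r-1}\, w'$ are atoms of lengths $r+1 = \mathsf D(G)$ and $r$, respectively. Inspection of zero-sum subsequences shows that the only atoms supported on $\mathrm{supp}(U)$ are $U$ together with the squares $g^2$ (because any proper nonempty subset of $\mathrm{supp}(U)$ is $\mathbb F_2$-independent in $G$), whence $\mathsf L(U^2) = \{2,\,r+1\}$, and similarly $\mathsf L(V^2) = \{2,\,r\}$. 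The sumset of these is $L := \{4,\,r+2,\,r+3,\,2r+1\}$.

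The remaining step, which I expect to be the main obstacle, is to show $L \notin \mathcal L(G)$ for $r \ge 4$. Assuming $\mathsf L(B) = L$ for some $B \in \mathcal B(G)$, the atom-length bounds together with $\min \mathsf L(B) = 4$ and $\max \mathsf L(B) = 2r+1$ force $|B| \in \{4r+2,\,4r+3,\,4r+4\}$. A $4$-factorization of $B$ must use only atoms of lengths $r$ and $r+1$, and a $(2r+1)$-factorization must consist almost entirely of squares, with at most a couple of atoms of length $3$ or $4$. I then proceed by case analysis on $|B|$ and on how the elements of $B$ distribute across these extremal factorizations; the goal is to show that $\mathrm{supp}(B)$ must contain a triple $\{g,\,h,\,g+h\}$ forming a length-$3$ atom, and then to exploit this atom to rearrange a factorization of $B$ into one of length in $[5,\,r+1] \cup [r+4,\,2r]$, contradicting $\mathsf L(B) = L$.

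This approach depends essentially on $r \ge 4$: for $r = 3$ the corresponding sumset would be $\{4,5,6,7\}$, which is an interval lying in $\mathcal L(C_2^3)$ by part~(3), so no contradiction arises. The combinatorial verification---that the rigid structure of the extremal factorizations forces a length-$3$ atom in $\mathrm{supp}(B)$, and that this atom yields a factorization of $B$ of forbidden length---is where I expect the bulk of the technical work; the argument uses that, for $r \ge 4$, the supports of atoms of lengths $r$ and $r+1$ are rich enough that triples of the form $\{g,h,g+h\}$ cannot be entirely avoided.
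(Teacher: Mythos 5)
Parts (1)--(3) are fine and coincide with the paper's treatment: the explicit descriptions are exactly \cite[Theorem 7.3.2]{Ge-HK06a}, and additive closure is then a routine check. Your computation of $\mathsf L(U^2)=\{2,r+1\}$ and $\mathsf L(V^2)=\{2,r\}$ is also correct (it is Lemma \ref{3.5}.1(b) of the paper with $k=1$), so $L=\{4,\,r+2,\,r+3,\,2r+1\}$ is indeed a sumset of two sets of lengths, with the right shape for $r\ge 4$.

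The problem is part (4): everything after ``the remaining step'' is a plan, not a proof. The entire content of the claim is that $L\notin\mathcal L(C_2^r)$, and you only assert that a case analysis ``should'' force a length-$3$ atom in $\supp(B)$ and that this atom ``should'' yield a forbidden length; neither step is carried out, and neither is routine. Compare with the paper: it also starts from sumsets built out of $\mathsf L(U^{2k})$, but it deliberately takes a $k$-fold sumset $L'+L''_k$ with $k\to\infty$. The point of large $k$ is a counting argument (comparing $|B_k|$ as computed from a minimal-length and a maximal-length factorization) which forces extreme rigidity: at most four atoms in a minimal-length factorization fail to have length $r+1$, some fixed atom of length $r+1$ occurs with multiplicity at least $6$, and then (Lemma \ref{lem_charr-2}, resting on Lemmas \ref{atoms}--\ref{3.5_2}) $\supp(B_k)$ is pinned down to $G_0\cup\{0,e_I\}$ with a single extra element $e_I$ of a very specific form, after which the contradiction is a short computation. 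In your setting $|B|\in[4r+2,4r+4]$ and the minimal-length factorization has only four atoms, so none of this rigidity is available: the four long atoms can have essentially unrelated supports, $\supp(B)$ can be large, and there is no obvious mechanism forcing a triple $\{g,h,g+h\}$ into $\supp(B)$, let alone one positioned so as to produce a factorization of length in $[5,r+1]\cup[r+4,2r]$. (For instance, if the minimal-length factorization is $U_1U_2U_3U_4$ with the $U_i$ of length $r+1$ and pairwise nearly disjoint supports, you must extract structure from the \emph{interaction} of the maximal-length factorization with these, which is exactly the kind of analysis the paper's lemmas exist to organize.) So either you must supply this substantial combinatorial argument for your specific $L$, or you should switch to the large-$k$ device; as written, the proof of the only hard case of the proposition is missing.
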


The proof of Proposition \ref{prop_el2} will be done in a series of lemmas. Since we believe that some are of interest in their own right we state them in more generality than needed for the immediate purpose at hand.  We fix our notation which will remain valid till the end of the proof of Proposition \ref{prop_el2}. Let $G=C_2^r$ with $r \in \mathbb N$ and let $(e_1, \ldots, e_r)$ be a  basis of $G$. Let $I, J \subset [1, r]$ be subsets. We denote by $I \triangle J = (I\cup J) \setminus (I \cap J)$ the symmetric difference. For an element $i \in [0,r] \setminus I$ we write $i \notin I$. If $I$ is nonempty, then we set
\[
e_I = \sum_{i \in I} e_i \,, \quad U_I = e_I\prod_{i \in I}e_i \,, \quad \text{ and} \quad V_I= e_I\prod_{i \in [0,r] \setminus I}e_i \,.
\]
Moreover, we set $e_0= e_{[1,r]}$,  $G_0 = \{e_0, \ldots, e_r\} $, and $V_0 = e_0 \cdot \ldots \cdot e_r$.
Obviously,   $\mathcal{A}(G_0) = \{h^2 \mid h \in G_0\} \cup \{ V_0\}$ and  $\mathcal{A}(G_0 \cup \{e_I\}) = \mathcal{A}(G_0)  \cup \{U_I, V_I, e_I^2\}$.

\begin{lemma} \label{3.5}
Let  $r \ge 3$.
\begin{enumerate}
\item Let $U = f_0 \cdot \ldots \cdot f_s \in \mathcal A (G)$ with $s \ge 2$.
      \begin{enumerate}
      \item The tuple $(f_1, \ldots, f_s)$ is independent and $f_0=f_1 + \ldots + f_s$.

      \smallskip
      \item If $k \in \N$,  then $\mathsf L (U^{2k}) = 2k +  (s-1) \cdot [0,k] \in \mathcal L (G)$. In particular, $\Delta ( \{f_0, \ldots, f_s \})= \{s-1\}$.
      \end{enumerate}

\smallskip
\item If $A \in \mathcal B (G)$ and $A$ is squarefree in $\mathcal F (G)$, then $\mathsf c (A) \le r$ and $\max \Delta ( \mathsf L (A)) \le r-2$.
\end{enumerate}
\end{lemma}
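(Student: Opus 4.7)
\smallskip

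\noindent\textbf{Plan.} Part~1(a) is routine: in $G = C_2^r$ every element has order $2$, so the zero-sum condition $\sigma(U) = 0$ reads $f_0 = f_1 + \dots + f_s$. If $\sum_{i \in I} f_i = 0$ for some $\emptyset \ne I \subsetneq [1, s]$, then $\prod_{i \in I} f_i$ would be a proper zero-sum subsequence of $U$, contradicting $U \in \mathcal A(G)$; combined with $f_i \ne 0$ (atoms contain no zero letter), this gives independence of $(f_1, \dots, f_s)$.

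For Part~1(b), set $G_0 := \{f_0, f_1, \dots, f_s\}$. Writing $B = \prod_{i=0}^{s} f_i^{a_i} \in \mathcal B(G_0)$ and using $f_0 = f_1 + \dots + f_s$ together with the independence of $(f_1, \dots, f_s)$, the zero-sum condition forces $a_0 \equiv a_1 \equiv \cdots \equiv a_s \pmod 2$; a minimality check then pins the atoms down to $\mathcal A(G_0) = \{f_0^2, f_1^2, \dots, f_s^2, U\}$. Each factorization of $U^{2k} = \prod_{i=0}^{s} f_i^{2k}$ is therefore of the form $U^{j} \cdot \prod_{i=0}^{s} (f_i^2)^{b_i}$ with $j + 2 b_i = 2k$, which forces $j$ even; setting $j = 2m$ with $m \in [0, k]$ gives length $2m + (s + 1)(k - m) = (s + 1) k - (s - 1) m$, hence $\mathsf L(U^{2k}) = 2k + (s - 1) \cdot [0, k]$. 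The same parity analysis applied to an arbitrary $B \in \mathcal B(G_0)$ yields either a singleton $\mathsf L(B)$ (all $a_i$ even) or an arithmetic progression of difference $s - 1$ (all $a_i$ odd), so $\Delta(G_0) = \{s - 1\}$.

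For Part~2, the inequality $\max \Delta(\mathsf L(A)) \le r - 2$ follows from $\mathsf c(A) \le r$ via $2 + \max \Delta(\mathsf L(A)) \le \mathsf c(A)$ recalled in the catenary-degree paragraph (the case $|\mathsf Z(A)| = 1$ being trivial), so the task reduces to proving $\mathsf c(A) \le r$. Because $A$ is squarefree, a factorization of $A$ is the same as a partition of $\supp(A)$ into minimal zero-sum subsets---equivalently, circuits of the $\F_2$-matroid on $\supp(A) \subset \F_2^r$---and every atom has length in $[3, r + 1]$.

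Given two factorizations $z, z'$ of $A$, the plan is to build a chain $z = z_0, z_1, \dots, z_n = z'$ in which each step creates one more common atom with $z'$: pick an atom $W$ of $z'$ absent from $z_{i-1}$, let $V_1, \dots, V_t$ be the atoms of $z_{i-1}$ meeting $\supp(W)$ (so $t \le |W|$), and replace them by $W$ together with an atomic factorization $B = U_1 \cdots U_{s'}$ of the squarefree residue $B = V_1 \cdots V_t / W$. Each step has distance $\mathsf d(z_{i-1}, z_i) = \max(t, 1 + s')$, and the main obstacle is bounding this by $r$ rather than the naive $r + 1$. The delicate case is $|W| = r + 1$ with $W$ meeting $r + 1$ distinct atoms of $z_{i-1}$: here Part~1(a) pins $W$ down to the rigid form $W = f_0 f_1 \cdots f_r$ with $(f_1, \dots, f_r)$ a basis, and squarefreeness of $A$ forbids $W$ from being repeated in $A$, forcing a clean circuit exchange in the binary matroid. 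Combining this rigidity with a careful choice of $W$ (or splitting into sub-moves through intermediate atoms of smaller size) should keep $\max(t, 1 + s') \le r$ and yield $\mathsf c(A) \le r$.
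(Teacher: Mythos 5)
Parts 1(a) and 1(b) of your argument are correct and essentially the paper's: you determine $\mathcal A(\{f_0,\dots,f_s\})=\{f_0^2,\dots,f_s^2,U\}$ and read off the lengths (the paper cites \cite[Corollary 5.1.9]{Ge-HK06a} for independence where you argue directly, which is fine). One small slip: when all exponents $a_i$ are even and positive, $\mathsf L(B)$ is still an arithmetic progression with difference $s-1$ rather than a singleton (e.g.\ $B=U^2$); this does not affect the conclusion $\Delta(\{f_0,\dots,f_s\})=\{s-1\}$. The reduction of $\max\Delta(\mathsf L(A))\le r-2$ to $\mathsf c(A)\le r$ also matches the paper.

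The genuine gap is in the proof of $\mathsf c(A)\le r$, and it is not confined to the case you flag as delicate. In your single-rotation step you replace the atoms $V_1,\dots,V_t$ of $z_{i-1}$ meeting $\supp(W)$ by $W\,U_1\cdots U_{s'}$ and assert that the naive bound on $\mathsf d(z_{i-1},z_i)=\max(t,1+s')$ is $r+1$. But $s'$ is the number of atoms in a factorization of the residue $B=W^{-1}(V_1\cdots V_t)$, whose length is $\sum_i|V_i|-|W|$, and this count is not controlled by $r$. For instance, if $t=|W|=r+1$ and each $|V_i|=r+1$, then $|B|=r(r+1)$, so \emph{every} factorization of $B$ has at least $|B|/\mathsf D(G)=r$ atoms and may have up to $|B|/3$ of them; already $1+s'\ge r+1$ no matter how the factorization of $B$ is chosen, and similar overshoots occur for many intermediate configurations (e.g.\ $t=r$, all $|V_i|=r+1$, $|W|=r$ gives $s'\ge\lceil r^2/(r+1)\rceil=r$). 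So even the bound $r+1$ for a single step is unjustified, and no choice of $W$ obviously repairs it. The paper circumvents this entirely by induction on $|A|$: after using squarefreeness and the rigidity of length-$(r+1)$ atoms to arrange, at the cost of one move of distance at most $r$, that some atom $V_1$ of $z'$ has $|V_1|\le r$, it observes that $V_1$ divides a subproduct $U_1\cdots U_l$ with $l\le|V_1|\le r<m$, so that $U_m$ is untouched; it then concatenates an $r$-chain in $\mathsf Z(U_m^{-1}A)$ (multiplied by $U_m$) with an $r$-chain in $\mathsf Z(V_1^{-1}A)$ (multiplied by $V_1$), both supplied by the induction hypothesis. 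You would need either to adopt such a divide-and-conquer induction or to find a genuinely new way to bound the refactorization count $s'$; as written, your chain construction does not go through.
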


\begin{proof}
1.(a) \cite[Corollary 5.1.9]{Ge-HK06a} implies that $(f_1, \ldots, f_s)$ is independent. Since $U$ has sum zero, it follows that $f_0=f_1 + \ldots + f_s$.

1.(b) Let $k \in \N$. Obviously, $\mathsf L (U^2) = \{2, s+1\}$, and $U, f_0^2, \ldots, f_s^2$ are the only atoms dividing $U^{2k}$. Thus $\mathsf L (U^{2k})$ is the $k$-fold sumset of $\mathsf L (U^2)$, and hence it has the asserted form. Let $d \in \Delta ( \{f_0, \ldots, f_s \})$. Then there is a $B \in \mathcal B ( \{f_0, \ldots, f_s\})$ with $d \in \Delta ( \mathsf L (B))$. There is a $k \in \N$ such that $B \t U^{2k}$, and we set $U^{2k}=BC$ with $C \in  \mathcal B ( \{f_0, \ldots, f_s\})$. If $m \in \mathsf L (C)$, then $m + \mathsf L (B) \subset \mathsf L (U^{2k}) = 2k +  (s-1) \cdot [0,k]$, and hence $d = s-1$.

\smallskip
2. Since $\max \Delta ( \mathsf L (A)) \le \max \{0, \mathsf c (A)-2\}$, it is sufficient to prove the statement on $\mathsf c (A)$ (recall our convention that $\max \emptyset = 0$). Furthermore, it is sufficient to consider squarefree zero-sum sequences $A$ with $0 \nmid A$. We proceed by induction on $|A|$. Since $\mathsf c (A) \le \max \mathsf L (A)$, the assertion holds for all $A$ with $\max \mathsf L (A) \le r$.

Let $A$ be a squarefree zero-sum sequence with $0 \nmid A$, and let $z = U_1 \cdot \ldots \cdot U_m$ and $z' = V_1 \cdot \ldots \cdot V_n$ be two factorizations of $A$ with $m,n \in \N$ and $U_1, \ldots, U_m, V_1, \ldots, V_n \in \mathcal A (G)$. If $m \le r$ and $n \le r$, then $\mathsf d (z, z') \le r$, and we are done. So we suppose without restriction that $m > r$.

Suppose that $|V_1|= \ldots = |V_n|= \mathsf D (G)=r+1$. Since $A$ is squarefree, $\gcd_{\mathcal F (G)}(V_1, V_2) = 1$ whence $V_1V_2=W_1 \cdot \ldots \cdot W_t$ with $t \in [3,r]$, $W_1, \ldots, W_t \in \mathcal A (G)$, and $|W_1| \le r$. Since $\mathsf d ( V_1 \cdot \ldots \cdot V_n, W_1 \cdot \ldots \cdot W_tV_3 \cdot \ldots \cdot V_n) = t \le r$, we may suppose -- after a suitable change of notation -- that $|V_1| \le r$.

Let $I \subset [1,m]$ be minimal such that $V_1 \t \prod_{i \in I} U_i$, say $I = [1,l]$. Then $l \le |V_1|\le r < m$, and there are $k \in \N$ and $W_2, \ldots, W_k \in \mathcal A (G)$, such that
\[
U_1 \cdot \ldots \cdot U_m = V_1 W_2 \cdot \ldots \cdot W_kU_{l+1} \cdot \ldots \cdot U_m = V_1 \cdot \ldots \cdot V_n \,.
\]
By induction hypothesis, there are $r$-chains of factorizations from $U_1 \cdot \ldots \cdot U_{m-1}$ to $V_1 W_2 \cdot \ldots \cdot W_kU_{l+1} \cdot \ldots \cdot U_{m-1}$ and from
$W_2 \cdot \ldots \cdot W_kU_{l+1} \cdot \ldots \cdot U_m$ to $V_2 \cdot \ldots \cdot V_n$. Multiplying the first chain with $U_m$ and the second chain with $V_1$ we obtain an $r$-chain from
$U_1 \cdot \ldots \cdot U_m$ to $V_1 \cdot \ldots \cdot V_n$.
\end{proof}

We already investigated the minimal zero-sum sequences over $G_0$ and one additional element. Next we consider the problem for two additional elements.

\begin{lemma} \label{atoms}
Let $r \ge 3$ and let $I , J \subset [1,r]$  with $|I|, |J| \in [2, r-1]$.
The minimal zero-sum sequences over $G_0 \cup \{e_I,e_J\}$ which are divisible by $e_I e_J$ are
\begin{itemize}
\item $U_{I,J} = e_I e_J \prod_{i \in I \triangle J} e_i$ if $I \cap J \neq \emptyset $,
\item $V_{I,J} = e_I e_J \prod_{i \notin I \triangle J} e_i$ if both $I \not\subset J$ and  $J \not\subset I$.
\end{itemize}
\end{lemma}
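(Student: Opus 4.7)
The plan is to start with an arbitrary atom $W \in \mathcal{A}(G_0 \cup \{e_I, e_J\})$ with $e_I e_J \t W$ and determine its shape by combining the sum condition with part 1.(a) of Lemma \ref{3.5}. First I dispose of the degenerate case $I = J$: then $e_I e_J = e_I^2$ is already a zero-sum, so $W = e_I^2 = U_{I,I}$ (and $I \cap J = I \ne \emptyset$ is automatic, while the condition defining $V_{I,J}$ fails, so no second atom arises). Assume henceforth $I \ne J$. Since $e_I^2$ and $e_J^2$ are themselves atoms, they cannot properly divide $W$, so $e_I$ and $e_J$ each appear with multiplicity $1$ in $W$; in particular $|W| \ge 3$. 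By part 1.(a) of Lemma \ref{3.5}, $W$ is squarefree in $\mathcal{F}(G)$, so I may write $W = e_I e_J \cdot T$ with $T \in \mathcal{F}(G_0)$ squarefree and $\sigma(T) = e_I + e_J = e_{I \triangle J}$.

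Next I would classify $T$ via a two-case split on whether $e_0 \t T$. Using that $(e_1, \ldots, e_r)$ is an $\F_2$-basis of $G$ and $e_0 = e_{[1,r]}$: if $e_0 \nmid T$, then $T = \prod_{i \in S} e_i$ for some $S \subset [1,r]$, and $e_S = e_{I \triangle J}$ forces $S = I \triangle J$, so $W = U_{I,J}$; if $e_0 \t T$, then $T = e_0 \prod_{i \in S} e_i$ for some $S \subset [1,r]$, and $e_{[1,r]} + e_S = e_{I \triangle J}$ forces $S = [1,r] \setminus (I \triangle J)$, so $W = V_{I,J}$. Thus only these two candidates arise.

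Finally I would determine which candidate is genuinely an atom by checking the minimality condition of part 1.(a) of Lemma \ref{3.5}, namely that removing any single element leaves an $\F_2$-linearly independent set. For $U_{I,J}$, removing $e_I$ leaves $\{e_J\} \cup \{e_i : i \in I \triangle J\}$; since the $e_i$ are basis vectors, independence is equivalent to $e_J \notin \langle e_i : i \in I \triangle J \rangle$, which amounts to $J \not\subset I \triangle J$ and hence to $I \cap J \ne \emptyset$. Symmetry handles the removal of $e_J$, and the removal of any $e_i$ ($i \in I \triangle J$) introduces no new condition by a short coordinate check. For $V_{I,J}$, set $K = [1,r] \setminus (I \triangle J)$; the $\F_2$-span of $\{e_0\} \cup \{e_i : i \in K\}$ can be written as $\{e_T : T \subset K\} \cup \{e_T : T \supset I \triangle J\}$, so after removing $e_I$ the element $e_J$ lies in this span precisely when $J \subset I$ or $I \subset J$; excluding these gives the stated condition. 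Once minimality has been secured for one removal, the uniqueness of the relation $e_I + e_J + e_0 + \sum_{i \in K} e_i = 0$ (all coefficients equal to $1$) guarantees that removing any other element also leaves an independent set. The main obstacle is the symmetric-difference bookkeeping in the $V_{I,J}$ case, where one must simultaneously track an $e_0$-coefficient and a subset of $[1,r]$; organizing everything through the explicit span formula above keeps the argument clean.
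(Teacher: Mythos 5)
Your proof is correct and follows essentially the same route as the paper's: force $\mathsf v_{e_I}(W)=\mathsf v_{e_J}(W)=1$, split on whether $e_0$ divides the atom to pin down the two candidate shapes $U_{I,J}$ and $V_{I,J}$, and then determine when each candidate is actually minimal. The only cosmetic difference is that you certify minimality via the $\F_2$-independence criterion coming from Lemma \ref{3.5}.1(a), whereas the paper checks non-divisibility by the atoms $U_I$ and $U_J$; the two tests are equivalent here.
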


\begin{proof}
Let $A \in \mathcal A (G)$ with $e_I e_J \t A$. If $I=J$, then $A=e_I^2=U_{I,I}$. Suppose that $I \ne J$. Then  $\mathsf v_{e_I}(A)= \mathsf v_{e_J} (A) = 1$.

If $e_0 \nmid A$, it follows that $A= e_I e_J \prod_{i \in I \triangle J} e_i$.  Since $A$ is neither divisible by $U_I$ nor by  $U_J$, it follows that $I \cap J \neq \emptyset$.

If $e_0 \mid A$, it follows that $A= e_I e_J \prod_{i \notin I \triangle J} e_i$. Again, any product of such a type lies in $\mathcal A (G)$ if and only if it is neither divisible by $U_I$ nor by $U_J$ (as it could only decompose as $U_IV_J$ and $U_JV_I$),  which is the case precisely when neither $I \subset J$ nor  $J \subset I$.
\end{proof}

We continue to use the notation $U_{I,J}$ and $V_{I,J}$ for all subsets $I, J \subset [1,r]$ (then $U_{I,J}$ and $V_{I,J}$ are not necessarily minimal zero-sum sequences).  

\medskip
\begin{lemma} \label{lem_length}
Let $r \ge 3$ and let $I , J \subset [1,r]$  with $|I|, |J| \in [2, r]$.
\begin{enumerate}
\item $\mathsf{L}(U_IU_J) = \{ 2,  1 + |I \cap J| \}$ if $I\cap J \neq \emptyset$, and $\mathsf{L}(U_IU_J) = \{2\}$ otherwise.

\smallskip
\item $\mathsf{L}(V_IV_J) = \{ 2,  1 + \delta + r+ 1  - |I \cup J| \}$, where $\delta =0 $ if $I\cap J \neq \emptyset$ and $\delta =1$  otherwise.

\smallskip
\item $\mathsf{L}(U_IV_J) = \{ 2,  1 + \delta + |I \setminus J| \}$, where  $\delta =0 $ if both $J \not\subset I$  and  $I \not\subset J$, and $\delta =1$  otherwise.
\end{enumerate}
\end{lemma}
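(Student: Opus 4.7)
The plan is to enumerate the factorizations of each of the three products by tracking which atom (or single atom covering both) contains the non-basis elements $e_I$ and $e_J$, and then to read off the quotient in the well-understood monoid $\mathcal{B}(G_0)$. Since $r \ge 3$ and $|I|, |J| \in [2, r]$, we have $e_I, e_J \notin G_0$ whenever $I, J \ne [1,r]$; in the degenerate case $I = [1,r]$ one has $U_I = V_0$ and $V_I = e_0^2$, and the formulae can be verified by a short direct computation. In the non-degenerate regime, each of $U_IU_J$, $V_IV_J$, $U_IV_J$ contains $e_I$ and $e_J$ with multiplicity exactly $1$, so every factorization uses exactly one atom containing $e_I$ and exactly one containing $e_J$ (possibly a single atom covering both), followed by a factorization of the remaining quotient in $\mathcal{B}(G_0)$.

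By Lemma \ref{atoms} together with $\mathcal{A}(G_0) = \{V_0\} \cup \{e_i^2 : i \in [0,r]\}$, the atoms of $\mathcal{A}(G_0 \cup \{e_I, e_J\})$ containing $e_I$ are $e_I^2, U_I, V_I, U_{I,J}, V_{I,J}$, subject to the existence conditions of Lemma \ref{atoms}; the square is ruled out by the multiplicity-one constraint, and similarly for $e_J$. For each candidate covering choice one checks divisibility by comparing multiplicities on each $e_i$; in every compatible case the quotient turns out to be either a single $V_0$ or a product $\prod_{i \in S} e_i^2$ with $S \subsetneq [0,r]$. Since $0 \notin I \cup J$ in the non-degenerate regime, the support $S$ is never all of $[0,r]$, so the quotient factors uniquely into squares, and each compatible covering choice contributes exactly one length.

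The case analysis, stratified by the intersection/containment pattern of $I$ and $J$, then yields the three formulae. For $U_IU_J$: the pair $(U_I, U_J)$ always gives length $2$, and when $I \cap J \ne \emptyset$ the atom $U_{I,J}$ gives quotient $\prod_{i \in I \cap J} e_i^2$ and length $1 + |I \cap J|$. For $V_IV_J$: the pair $(V_I, V_J)$ gives length $2$; a longer factorization comes from $(U_I, U_J)$ when $I \cap J = \emptyset$ (requiring two covering atoms, hence $\delta = 1$) or from $U_{I,J}$ when $I \cap J \ne \emptyset$ (single covering atom, hence $\delta = 0$), in both instances with quotient $\prod_{i \in [0,r] \setminus (I \cup J)} e_i^2$; the cover by $V_{I,J}$ (available when $I \cap J = \emptyset$) has quotient $V_0$ and reproduces length $2$. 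For $U_IV_J$: the pair $(U_I, V_J)$ gives length $2$; the longer factorization comes from $V_{I,J}$ when neither of $I, J$ contains the other ($\delta = 0$), or from $(V_I, U_J)$ when $J \subseteq I$ ($\delta = 1$; here $V_{I,J}$ fails to exist so two atoms are needed), and collapses to length $2$ when $I \subseteq J$, with quotient $\prod_{i \in I \setminus J} e_i^2$ in each surviving case.

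The main obstacle is the volume of multiplicity bookkeeping: for each of the three products and each intersection pattern one must verify that no covering choice other than those listed above is divisibility-compatible, and that no quotient supports a second factorization through $V_0$ (which would demand its support to cover all of $G_0$; this is precluded since $0 \notin I \cup J$ whenever $|I|, |J| < r+1$). The checks are all elementary multiplicity comparisons, and once completed the formulae follow by reading $|I \cap J|$, $|I \cup J|$, or $|I \setminus J|$ from the respective quotients.
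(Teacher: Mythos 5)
Your proposal is correct and follows essentially the same route as the paper: classify, via Lemma \ref{atoms} and the description of $\mathcal A(G_0\cup\{e_I\})$, the atoms that can cover $e_I$ and $e_J$, check divisibility by multiplicity comparisons for each intersection/containment pattern, and read off the length from the quotient, which factors uniquely into squares (or is $V_0$). The only caveat is that your blanket ``multiplicity exactly one'' claim fails for $I=J$ (where $e_I=e_J$ occurs with multiplicity two), but that degenerate case, like $I=[1,r]$, succumbs to the same short direct computation you already invoke.
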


\begin{proof}
1. First, we note that if there exists a factorization of $U_IU_J$ other than this one, then it must contain a minimal zero-sum sequence containing both $e_I$ and $e_J$.  We have $U_IU_J = U_{I,J}  \prod_{i \in I\cap J }e_i^2$. For $I \cap J \neq \emptyset $, we know by Lemma \ref{atoms} that $U_{I,J}$ is a minimal zero-sum sequence, and we thus have a factorization of length $1 + |I \cap J|$. If however $I\cap J= \emptyset$, then $U_{I,J} = U_I U_J$.

2. Suppose $I \cap J =  \emptyset$. Then $V_IV_J = U_I U_J \prod_{i \notin I \cup J} e_i^2$ and these two are the only factorizations not involving a minimal zero-sum sequence containing both $e_I$ and $e_J$. In this case $U_{I,J}$, is not minimal. The only remaining factorization is thus $V_{I,J} V_0$

Suppose $I \cap J \neq \emptyset$. Then  $V_IV_J$ is not divisible by  $U_{I},U_{J}$ and $V_{I,J}$, since we do not have  $e_i$ in $V_IV_J$ for $i \in I \cap J$.  The only other factorization is thus $U_{I,J} \prod_{i \notin I \cup J}e_i^2$.

3. If $J \subset I$, we observe that $V_I \mid U_I V_J $ and we get the factorization $V_IU_J \prod_{i \in I  \setminus J} e_i^2$.
The only other factorization is $U_{I,J} V_0$.

If $J \not \subset I$, we note that  $e_i$ for $i \in J \setminus I$ does not appear in $U_IV_J$. Thus, $U_IV_J$ is not divisible by $U_J$ and $U_{I,J}$. The only possibly other decomposition is thus $V_{I,J} \prod_{i \in I \setminus J}e_i^2$. Note that $V_{I,J}$ is minimal if and only if $I \not \subset J$.
\end{proof}

\medskip
\begin{lemma} \label{lem_minfact}
Let $r \ge 3$ and let $A \in \mathcal{A}(G)$ be such that  $e_I \mid A$ where $I \subset [1,r]$ with $|I| \in [2, r-1]$.
Then there exist  $B, B' \in \mathcal {B}(G) \setminus \{1\}$ with $\max\mathsf{L} (B) \le |I|$ and  $\max\mathsf{L} (B') \le r+ 1 - |I|$
such that $AV_0 = V_I B = U_I B'$.
In particular, if neither $B$ nor $B'$ is a minimal zero-sum sequence, then  $\min ( \mathsf L (AV_0) \setminus \{2\}) \le \min \{ |I| +1 , r+ 2 - |I| \} \le (r+3)/2$.
\end{lemma}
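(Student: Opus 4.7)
The plan is to realize $B$ and $B'$ concretely by dividing $AV_0$ by $V_I$ and $U_I$, and then to exploit the independence statement of Lemma~\ref{3.5}(1)(a) in a combinatorial pigeonhole argument to bound their maximum factorization lengths.

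Since $e_I \mid A$ and $|A| \ge 2$, we may write $A = e_I \cdot T$ with $T = t_1 \cdots t_s$ and $s = |A|-1 \ge 1$. Lemma~\ref{3.5}(1)(a) (or a direct check when $s=1$, which forces $t_1 = e_I$) ensures that $(t_1, \ldots, t_s)$ is independent and $t_1 + \ldots + t_s = e_I$. A short calculation in $\mathcal{F}(G)$ shows that
\[
B \ := \ T \cdot \prod_{i \in I} e_i \quad \text{and} \quad B' \ := \ T \cdot e_0 \cdot \prod_{i \in [1,r] \setminus I} e_i
\]
are nonunit zero-sum sequences satisfying $V_I B = AV_0 = U_I B'$.

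The crux---and the main obstacle---is the bound $\max \mathsf{L}(B) \le |I|$. Let $B = W_1 \cdots W_k$ be any factorization into atoms. Labeling the positions of $B$ by their origin identifies each $W_\nu$ with a pair of index sets $J_\nu \subset [1,s]$ (the $t_j$-factors it absorbs) and $I_\nu \subset I$ (the $e_i$-factors it absorbs), so that $\{J_\nu\}_\nu$ partitions $[1,s]$, $\{I_\nu\}_\nu$ partitions $I$, and
\[
\sum_{j \in J_\nu} t_j + \sum_{i \in I_\nu} e_i = 0 \quad \text{for every } \nu \in [1,k].
\]
If some $I_\nu$ were empty, then $\sum_{j \in J_\nu} t_j = 0$ would, by independence of $(t_1, \ldots, t_s)$, force $J_\nu = \emptyset$, contradicting the nontriviality of $W_\nu$. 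Hence every $I_\nu$ is nonempty, which gives $k \le |I|$. The same argument applied to $B'$, with the role of $I$ played by $\{0\} \cup ([1,r] \setminus I)$ (of size $r+1-|I|$), yields $\max \mathsf{L}(B') \le r+1-|I|$.

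For the ``in particular'' statement, assume neither $B$ nor $B'$ is an atom. Since $V_I$ and $U_I$ are atoms, every factorization of $B$ (resp.\ $B'$) lifts to one of $AV_0$ of one greater length, so $1 + \mathsf{L}(B) \subset \mathsf{L}(AV_0)$ and $1 + \mathsf{L}(B') \subset \mathsf{L}(AV_0)$. By the hypothesis, $\mathsf{L}(B) \subset [2, |I|]$ and $\mathsf{L}(B') \subset [2, r+1-|I|]$, so both shifts produce elements of $\mathsf{L}(AV_0)$ that lie in $[3, |I|+1]$ and $[3, r+2-|I|]$ respectively, all distinct from $2$. Taking minima gives $\min(\mathsf{L}(AV_0) \setminus \{2\}) \le \min\{|I|+1,\, r+2-|I|\}$, and the latter minimum is at most the average of its two arguments, namely $(r+3)/2$.
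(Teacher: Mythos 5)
Your proof is correct and takes essentially the same route as the paper: you construct the same sequences, namely $B = F\cdot\prod_{i\in I}e_i$ and $B' = F\cdot\prod_{i\in[0,r]\setminus I}e_i$ with $F=e_I^{-1}A$, and verify $AV_0=V_IB=U_IB'$. The only difference is that where the paper obtains the length bounds by citing \cite[Lemma 6.4.3]{Ge-HK06a} (a product of two zero-sum free sequences $S_1S_2$ satisfies $\max\mathsf L(S_1S_2)\le\min\{|S_1|,|S_2|\}$), you inline a direct proof of that fact using the independence of $(t_1,\ldots,t_s)$ from Lemma \ref{3.5}.1(a) to show every atom in a factorization of $B$ (resp.\ $B'$) must absorb at least one letter of $\prod_{i\in I}e_i$ (resp.\ $\prod_{i\in[0,r]\setminus I}e_i$); the substance is identical.
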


\begin{proof}
Clearly, the sequences $F= e_I^{-1}A$, $S_V=  \prod_{ i \in I}e_i$, and $S_U=  \prod_{ i \notin I}e_i$  are zero-sum free, and we
have $AV_0 = V_ I (S_V F) = U_I (S_U F)$.
We set $B= S_V F$ and $B'  = S_U F$, and  by \cite[Lemma 6.4.3]{Ge-HK06a} we infer that $\max \mathsf L (B) \le |S_V|$ and $\max \mathsf L (B') \le |S_U|$. The additional statement follows immediately.
\end{proof}

\medskip

\begin{lemma}\label{3.5_2}
Let  $r \ge 3$.
\begin{enumerate}
\item Let $A \in \mathcal B (G)$ with $\Delta (\mathsf L (A)) \neq \emptyset$ . Then the following statements are equivalent{\rm \,:}
      \begin{enumerate}
      \item $r-1 \in \Delta (\mathsf L (A))$.
      \item There is a basis $(f_1, \ldots, f_r)$ of $G$ such that $\supp (A) \setminus \{0\} = \{f_1, \ldots, f_r, f_1+ \ldots + f_r\}$.
      \end{enumerate}

\smallskip
\item Let $G_1 \subset G\setminus \{0\}$ be a subset. Then $\min \Delta (G_1) = r-1$ if and only if $G_1 = \{f_1, \ldots, f_r, f_1+ \ldots + f_r\}$ for some basis $(f_1, \ldots, f_r)$ of $G$.
\end{enumerate}
\end{lemma}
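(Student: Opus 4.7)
The plan is to perform a case analysis on $|G_1|$ (for part 2) or on $|\supp(A) \setminus \{0\}|$ (for part 1), anchored on Lemma \ref{3.5}(1)(b) applied with $s = r$: it yields $\Delta(\{f_0, f_1, \ldots, f_r\}) = \{r - 1\}$ whenever $(f_1, \ldots, f_r)$ is a basis of $G = C_2^r$ and $f_0 = f_1 + \ldots + f_r$.

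The implications (b) $\Rightarrow$ (a) of part 1 and the ``if'' direction of part 2 follow immediately from this. For part 2, $\Delta(G_1) = \{r - 1\}$ gives $\min \Delta(G_1) = r - 1$. For part 1, since adding copies of $0$ to a sequence shifts its set of lengths but does not alter the distance set, $\Delta(\mathsf{L}(A)) \subseteq \Delta(\{f_0, f_1, \ldots, f_r\}) = \{r - 1\}$, and the hypothesis $\Delta(\mathsf{L}(A)) \neq \emptyset$ forces equality, hence $r - 1 \in \Delta(\mathsf{L}(A))$.

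For the converse of part 2, assume $\min \Delta(G_1) = r - 1$. Combined with the fact $\Delta(G) = [1, r - 1]$ recalled before Proposition \ref{prop_el2}, this forces $\Delta(G_1) = \{r - 1\}$. A rank argument shows $\langle G_1 \rangle = G$ (a rank-$s$ subgroup caps $\max \Delta$ at $s - 1 < r - 1$), so $|G_1| \ge r$; the case $|G_1| = r$ is excluded since a basis yields $\mathcal{B}(G_1)$ free. If $|G_1| = r + 1$, writing $G_1 = \{f_1, \ldots, f_r, f_I\}$ with $|I| \in [2, r]$, the unique non-squared atom $U_I$ gives $\Delta(G_1) = \{|I| - 1\}$ by Lemma \ref{3.5}(1)(b), forcing $|I| = r$. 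If $|G_1| \ge r + 2$, picking two distinct extras $h_1 = f_{I_1}$ and $h_2 = f_{I_2}$, each contributes $|I_i| - 1 \in \Delta(G_1) = \{r - 1\}$, so $|I_1| = |I_2| = r$ and $h_1 = h_2 = f_0$, a contradiction.

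The converse of part 1 is the main obstacle. Assume $r - 1 \in \Delta(\mathsf{L}(A))$ and set $G_1 = \supp(A) \setminus \{0\}$. The rank argument and the cases $|G_1| \le r + 1$ proceed exactly as in part 2 (for $|G_1| = r+1$ one uses that every $\mathsf{L}(B)$ with $B \in \mathcal{B}(G_1)$ has distance set in $\{|I| - 1\}$, so $|I| = r$). The delicate case is $|G_1| \ge r + 2$, where one must show $r - 1 \notin \Delta(\mathsf{L}(A))$. Since the distance $r - 1$ can be realized only in the presence of a long atom (an atom of length $\le r$ contributes distance $\le r - 2$ by Lemma \ref{3.5}(1)(b), and by Lemma \ref{3.5}(2) a squarefree zero-sum has $\max \Delta \le r - 2$), $G_1$ must contain the support $\{e_0, e_1, \ldots, e_r\}$ of some long atom $V_0$; by assumption $G_1$ also contains an extra element $e_I$ with $|I| \in [2, r - 1]$. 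Lemma \ref{atoms} then provides atoms $U_I$ and $V_I$ satisfying the identity $U_I V_I = e_I^2 V_0$ from Lemma \ref{lem_length}(3), together with the short-length switches $U_I^2 \leftrightarrow e_I^2 \prod_{i \in I} e_i^2$ (length increment $|I| - 1 \le r - 2$) and $V_I^2 \leftrightarrow e_I^2 e_0^2 \prod_{i \notin I} e_i^2$ (length increment $r - |I| \le r - 2$). The hard part is to verify that for any two factorizations $z, z'$ of $A$ with $|z'| - |z| = r - 1$, one of these switches (possibly after applying the length-preserving switch $U_I V_I \leftrightarrow e_I^2 V_0$) yields a factorization of $A$ of some length strictly between $|z|$ and $|z'|$, contradicting $r - 1 \in \Delta(\mathsf{L}(A))$ and completing the proof.
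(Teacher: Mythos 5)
Your treatment of the easy directions and of part 2 is sound; in fact your direct case analysis on $|G_1|$ for part 2 is a valid alternative to the paper's route, which instead deduces part 2 from part 1. The reduction of the converse of part 1 to the case $|\supp(A)\setminus\{0\}|\ge r+2$ is also correct. But in exactly that case your argument has a genuine gap, in two places. First, the claim that ``the distance $r-1$ can be realized only in the presence of a long atom'' is not established by the reasons you give: Lemma \ref{3.5}.1(b) bounds the distances arising from the support of a \emph{single} atom, and Lemma \ref{3.5}.2 applies only to \emph{squarefree} $A$, so neither controls $\Delta(\mathsf L(A))$ for a general $A$ all of whose atoms happen to be short. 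The paper obtains this structural fact from the precise value $\mathsf c(G)=r+1$ of the catenary degree: a gap $[l,l+r-1]\cap\mathsf L(A)=\{l,l+r-1\}$ forces two factorizations $z_1,z_2$ with $\mathsf d(z_1,z_2)=r+1$ and $|z_2|-|z_1|=r-1$, and a length count then forces $z_1=V_0^2z$ with $V_0$ an atom of length $r+1$ and $z=\gcd(z_1,z_2)$. Without some such argument the existence of $V_0$, and hence of the basis underlying your element $e_I$, is unproven.

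Second, and more seriously, the concluding interpolation step is explicitly left unverified (``the hard part is to verify that \dots''), and this is precisely the technical core of the lemma. The paper carries it out as follows: having the minimal-length factorization $z_1=V_0^2z$ of length $l$ and an extra element $e_I$ with $|I|\in[2,r-1]$ dividing some atom $D$ of $z$ (it must lie in $z$ since $e_I\notin\supp(V_0)$), Lemma \ref{lem_minfact} gives $DV_0=U_IC_U=V_IC_V$ with $\max\mathsf L(C_U)\le r+1-|I|$ and $\max\mathsf L(C_V)\le|I|$; if neither $C_U$ nor $C_V$ were an atom, the ``in particular'' clause would already place a length in $(l,l+r-1)$, and if one of them is an atom one obtains a length-$l$ factorization of $A$ containing $V_0U_I$ or $V_0V_I$, whence Lemma \ref{lem_length} produces the intermediate length $l+|I|-1$ or $l+r-|I|$. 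Your sketch names the relevant identities ($U_IV_I=e_I^2V_0$ and the length sets of the pair products) but does not show how an arbitrary pair of factorizations realizing the gap can be brought into a position where one of these switches applies; as written the proof of the implication (a) $\Rightarrow$ (b) is incomplete.
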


\begin{proof}
1.  Lemma \ref{3.5} shows that (b) implies (a). Conversely, let $A \in \mathcal B (G)$ such that $r-1 \in \Delta (\mathsf L (A))$, say $[l,l+r-1] \cap \mathsf L (A) = \{l , l +r-1\}$.
Since $\mathsf c (G) = r+1 $ by \cite[Theorem 6.4.7]{Ge-HK06a}, there exist factorizations $z_1$ and $z_2$ of $A$ with $|z_1|= l $ and $|z_2| = l +r-1$ such that $\mathsf d(z_1,z_2)= r+1$, say  $z_1 = U_1 \cdot \ldots \cdot U_sz$,  $z_2 = V_1 \cdot \ldots \cdot V_{t}z$ where $z = \gcd (z_1, z_2)$, $U_1, \ldots, U_s,V_1, \ldots, V_t \in \mathcal A (G)$, and $\max \{s,t\}=t=r+1$. Since $|z_1|=s+|z|=l$ and $|z_2|=t+|z|=l+r-1$, it follows that $t-s=r-1$ whence $s=2$ and $t=r+1$.  Thus $U_1U_2= V_1 \cdot \ldots \cdot V_{r+1}$, whence $U_1=U_2$,  $|U_1|=r+1$, and $|V_1|=\ldots=|V_{r+1}|=2$. Without loss of generality assume that $U_1 = V_0$.

Assume $A$ is not of the  claimed form. Then there exists some $e_I \mid A$ with $|I|\in [2, r-1]$. Let $D \mid z$ with $D \in \mathcal A (G)$ be such that $e_I \mid D$.
By Lemma \ref{lem_minfact} we have $DV_0= V_I C_V= U_I C_U$ with $C_U, C_V \in \mathcal {B}(G) \setminus \{1\}$. Since  $\max \mathsf L (D V_0)+|z| \in \mathsf L (A)$, the `in particular' statement of  Lemma \ref{lem_minfact} implies $C_U \in \mathcal {A}(G)$ or $C_V \in \mathcal {A}(G)$.

Thus, we have that $V_0 V_I C_V (D^{-1}z)$ or $V_0 U_I C_U (D^{-1}z)$  is a factorization of $A$ of length $|z_1|$. Yet, by Lemma  \ref{lem_length} it follows that $\mathsf L (V_0 V_I) = \{ 2 , 1 + (r+1 - |I|) \}$ and $\mathsf L  (V_0 U_I)= \{2, 1 + |I|\}$. Thus, $l + r -|I|$ or $l  + |I| - 1$ is an element of $\mathsf L (A)$, a contradiction.

2. That $\min \Delta( \{f_1 , \ldots, f_r, f_1+ \ldots + f_r \} )= r-1$ for a basis $(f_1, \ldots, f_r)$ follows by Lemma \ref{3.5}. Conversely, if $\min \Delta(G_1) = r-1$, then there exists some $A \in \mathcal B (G_1)$ with $r-1 \in \Delta (\mathsf L (A))$. By the first part, we get that  $\supp (A) = \{f_1, \ldots, f_r, f_1+ \ldots + f_r\}$ for a basis $(f_1, \ldots, f_r)$. If $G_1$ would contain any other element, it would equal $f_I= \sum_{i \in I} f_i$ with some $I \subset [1,r]$ and $|I | \in [2, r - 1]$. Then, $f_I \prod_{i \in I}f_i \in \mathcal A (G_1)$ and Lemma \ref{lem_length}.1 yields $|I|-1 \in  \Delta(G_1)$, a contradiction.
\end{proof}

\medskip
\begin{lemma} \label{lem_charr-2}
Let $r \ge 4$, $B \in \mathcal{B}(G)$,  and let $z_0 \in \mathsf Z (B)$ be a factorization of  length $|z_0|=\min \mathsf L (B)$ such that $V_0^2 \t z_0$.
If $\min \big( \mathsf{L}(B) \setminus \min  \mathsf{L}(B) \big) = \min\mathsf{L}(B) + ( r - 2)$, then
$| \supp (B) \setminus (G_0 \cup \{0\}) | = 1$ and this extra element is the sum of two distinct elements from $G_0$.
\end{lemma}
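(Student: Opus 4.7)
The plan is the following. Set $m = \min \mathsf L (B)$, let $z \in \mathsf Z (B)$ be a factorization of length $m + (r-2)$ (which exists by the hypothesis on $\min (\mathsf L (B) \setminus \min\mathsf L(B))$), and let $E = \supp (B) \setminus (G_0 \cup \{0\})$. Every element of $E$ has the form $e_I$ for some $I \subset [1, r]$ with $|I| \in [2, r-1]$, and the conclusion amounts to the two assertions $|E|=1$ and $|I| \in \{2, r-1\}$ for the unique $e_I \in E$; the latter is equivalent to $e_I$ being a sum of two distinct elements of $G_0$, since $|I|=2$ gives $e_I = e_i + e_j$ with $i, j \in [1, r]$ distinct, and $|I|=r-1$ gives $e_I = e_0 + e_j$ with $\{j\}=[1,r]\setminus I$.

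First I would show that $E \neq \emptyset$. If $E$ were empty, every atom of $\mathcal B (\supp B)$ would lie in $\{V_0\} \cup \{e_i^2 : i \in [0,r]\}$ (aside from the irrelevant atom $0$), and the only nontrivial relation among these is $V_0^2 = \prod_{i=0}^{r} e_i^2$. Consequently any two factorizations of $B$ have lengths differing by a multiple of $r-1$, so $\Delta(\mathsf L (B)) \subseteq \{r-1\}$. Since $r \ge 4$, this contradicts $r-2 \in \Delta(\mathsf L(B))$.

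Next I would pin down the shape and cardinality of $E$. Because $V_0^2 \t z_0$, the swap $V_0^2 \leftrightarrow \prod_{i=0}^{r} e_i^2$ is always applicable to $z_0$ and yields a factorization of length $m + r - 1$, so the factorization $z$ of length $m + r - 2$ must come from a rewrite of net length change strictly between $0$ and $r - 1$. Lemmas~\ref{atoms}, \ref{lem_length}, and \ref{lem_minfact} furnish the local rewrites that produce such changes: for each $e_I \in E$,
\begin{align*}
V_0\, U_I &\;\leftrightarrow\; V_I \prod_{i \in I} e_i^2 \qquad (\text{length change } |I| - 1), \\
V_0\, V_I &\;\leftrightarrow\; U_I \prod_{i \in [0,r] \setminus I} e_i^2 \qquad (\text{length change } r - |I|),
\end{align*}
together with the length-preserving $U_I V_I \leftrightarrow V_0\, e_I^2$, which lets one replace a copy of $e_I^2$ (when available) by $U_I V_I$ in an equivalent factorization, thereby making $U_I$ and $V_I$ accessible. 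For two distinct extras $e_I, e_J \in E$ there are furthermore the cross-swaps $U_I U_J \leftrightarrow U_{I,J} \prod_{i \in I \cap J} e_i^2$ (when $I \cap J \neq \emptyset$, with length change $|I \cap J|-1$) and an analogue built from $V_{I,J}$. The argument then proceeds by case analysis: if some $e_I \in E$ has $|I| \in [3, r-2]$, then both $|I| - 1$ and $r - |I|$ are positive and at most $r - 3$, and after the length-preserving rewrite if needed, one of the two displayed swaps becomes applicable and produces a factorization of $B$ of length in $(m, m + r - 2)$, contradicting the hypothesis; if $|E| \ge 2$, the cross-swap (or its $V$-analogue for disjoint $I, J$) produces a positive length change strictly less than $r - 2$, again a contradiction. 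Hence $|E|=1$ and the unique $e_I$ has $|I| \in \{2, r-1\}$, as required.

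The main obstacle is this final case analysis: for every configuration violating the conclusion, one has to exhibit an explicit sequence of local rewrites starting from (a factorization equivalent to) $z_0$ that yields a factorization of $B$ of length strictly inside $(m, m + r - 2)$. The delicate point is ensuring the atoms required for each rewrite can be introduced into a factorization equivalent to $z_0$; here the hypothesis $V_0^2 \t z_0$ is essential, since it supplies enough copies of $V_0$ both for the length-preserving preparations and for the length-changing swaps themselves.
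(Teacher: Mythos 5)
Your skeleton coincides with the paper's: first show $E\neq\emptyset$ (the paper does this via Lemma~\ref{3.5_2}.2, your relation argument over $G_0$ is equivalent), then force $|I|\in\{2,r-1\}$ for every $e_I\in E$, then exclude $|E|\ge 2$ by examining pairwise products; and the local identities you display ($V_0U_I=V_I\prod_{i\in I}e_i^2$, $V_0V_I=U_I\prod_{i\in[0,r]\setminus I}e_i^2$, $U_IV_I=e_I^2V_0$, and the cross-products from Lemma~\ref{atoms}) are exactly the computations of Lemma~\ref{lem_length}. However, there is a genuine gap, and it sits at the point you yourself flag as ``the main obstacle'': every one of your swaps requires that $U_I$, $V_I$, or $e_I^2$ actually occur in some factorization of $B$ of the relevant length, and nothing in your argument produces such an occurrence. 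The atom $A_I$ of $z_0$ that contains $e_I$ is an arbitrary minimal zero-sum sequence through $e_I$ --- it may involve several elements outside $G_0$ and need not be any of $U_I$, $V_I$, $e_I^2$ --- so none of your rewrites is applicable to $z_0$, and the ``length-preserving preparation'' $V_0e_I^2\leftrightarrow U_IV_I$ cannot be invoked either, since it presupposes $e_I^2$ in the factorization. Declaring that ``one of the two displayed swaps becomes applicable'' is precisely the assertion that needs proof.

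The paper closes this gap with Lemma~\ref{lem_minfact} applied to the \emph{arbitrary} atom $A_I\mid z_0$ with $e_I\mid A_I$: one always has $A_IV_0=V_IC_V=U_IC_U$ with $\max\mathsf L(C_V)\le|I|$ and $\max\mathsf L(C_U)\le r+1-|I|$, and if neither $C_U$ nor $C_V$ were an atom one would obtain an element of $\mathsf L(B)$ in $\bigl(\min\mathsf L(B),\ \min\mathsf L(B)+(r+3)/2-2\bigr]$, contradicting the hypothesis that the second-smallest length is $\min\mathsf L(B)+(r-2)$ (here $r\ge4$ is used via $(r-1)/2<r-2$). Only this forces a minimal-length factorization containing $W_I\in\{U_I,V_I\}$, after which $\mathsf L(W_IV_0)$ yields $|I|\in\{2,r-1\}$; note that which of the two values occurs is dictated by which of $U_I,V_I$ becomes accessible, not by a free choice between your two swaps. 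The same device must be applied a second time (using the second copy of $V_0$, and checking $A_J\neq W_I$) to make the product $W_IW_J$ accessible in the $|E|\ge2$ step; there one must also track which of $U_\cdot,V_\cdot$ is available for each index set, since for instance $\mathsf L(U_IU_J)=\{2\}$ when $|I\cap J|=1$, so the $U$-cross-swap alone can be length-preserving and gives no contradiction. Your proposal lists Lemma~\ref{lem_minfact} among the ``local rewrites'' but never uses it in its actual role as the accessibility argument, so the proof is incomplete at its load-bearing step.
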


\begin{proof}
By Lemma \ref{3.5_2}.2,  $\supp (B) \setminus (G_0 \cup \{0\}) \ne \emptyset$, and hence there exists some $I \subset [0,r]$ such that $e_I \notin G_0$ and $e_I \mid B$.  Let $A_I \in \mathcal A (G)$ be such that $A_I \mid z_0$ and $e_I \mid A_I$.  Since $\min  \mathsf{L}(B)- 2 + \mathsf{L}(A_IV_0) \subset \mathsf{L}(B) $ and since $r > (r+3)/2$, it follows by Lemma \ref{lem_minfact} that $A_I V_0 =W_IC_I$ with $W_I \in \{U_I, V_I \}$ and $C_I \in \mathcal A (G)$.

By Lemma \ref{lem_length} we have that $\mathsf{L}(U_I V_0) = \{2, |I|+ 1\}$.  Thus if $W_I = U_I$, we infer that $|I|-1 \ge r-2$ and thus $|I|= r-1$.
We also have $\mathsf{L}(V_I V_0) = \{2, 2 + r - |I|\}$. Thus if $W_I = V_I$, we infer that $|I| = 2$.
Therefore we have shown that each non-zero element in $\supp (B) \setminus G_0$  is  the sum of two distinct elements from $G_0$.

Now, we assume to the contrary that there exist two distinct sets $I, J \subset [1,r]$  such that $e_I, e_J \notin G_0$ and  $e_I e_J \mid B$.  Let $z_0' = W_I C_I  ( (A_IV_0)^{-1}z_0 ) $ be the factorization constructed above and note that $V_0$ divides $z_0'$. Let $A_J \in \mathcal A (G)$ be such that $A_J \mid z_0'$ and $e_J \mid A_J$. Note that $A_J  \neq W_I$.
As above we obtain that $A_J V_0 $ equals $W_JC_J$ with $W_J \in \{U_J, V_J \}$, $C_J \in \mathcal A (G)$,  and $|J| \in \{2, r-1\}$.
In particular, we have a factorization $z_0'' \in \mathsf Z (B)$ of minimal length with $W_I W_J \t z_0''$ and hence $\min  \mathsf{L}(B) - 2 + \mathsf{L}(W_IW_J) \subset \mathsf{L}(B) $.

We analyze $\mathsf{L}(W_IW_J)$, and distinguish four cases. We use Lemma \ref{lem_length} throughout.

\noindent
CASE 1: \, $W_I= U_I$ and $W_J = U_ J $.

We have $|I|= |J|= r-1$  and thus  $|I  \cap J| = r-2$ as $I \neq J$. Now
$\mathsf{L}(U_IU_J) = \{2, |I  \cap J| + 1 \} = \{2, r-1\}$, a contradiction.

\smallskip
\noindent
CASE 2: \,  $W_I= U_I$ and $W_J = V_ J $.

We have $|I|= r-1$ and $|J| = 2$.
If  $J \subset I$, then $\mathsf{L}(U_IV_J) = \{2, 2 + |I  \setminus  J|  \} = \{2, r-1\}$, a contradiction.
If  $J  \not \subset I$, then $\mathsf{L}(U_IV_J) = \{2, 1 + |I  \setminus  J|  \} = \{2, r-1\}$, a contradiction.

\smallskip
\noindent
CASE 3: \,  $W_I= V_I$ and $W_J = U_ J $.

Completely analogous to CASE 2.

\smallskip
\noindent
CASE 4: \,  $W_I= V_I$ and $W_J = V_ J $.

We have  $|I|= |J| = 2$.
If  $ I \cap J = \emptyset $, then $\mathsf{L}(V_IV_J) = \{2, 2  + r +1 - |I  \cup  J|  \} = \{2, r-1\}$, a contradiction.
If  $ I \cap J \neq \emptyset $, then $\mathsf{L}(V_IV_J) = \{2, 1 + r + 1 -  |I  \cup  J|  \} = \{2, r-1\}$, a contradiction.
\end{proof}

\medskip
\begin{proof}[Proof of Proposition \ref{prop_el2}]
 For $r \le 3$ the claim follows from \cite[Theorem 7.3.2]{Ge-HK06a}. We assume $r\ge 4$ and need to show that $\mathcal{L}(G)$ is not additively closed.

By Lemma \ref{3.5}, we infer that $L' = \{4, r+ 2, 2r\} \in \mathcal L (G)$ and $L_k'' = 2k +  (r-1) \cdot [0,k] \in \mathcal L (G)$ for each $k \in \N$.
We assert that the sumset $L_k = L' + L_k'' \notin \mathcal L (G)$  for all sufficiently large $k \in \N$. Assume to the contrary that  there exist $B_k = 0^{v_k} B_k'$, where $v_k \in \N_0$ and $B_k' \in \mathcal B (G \setminus \{0\})$,  such that $\mathsf{L}(B_k)= L_k$ for each $k \in \N$.
Note that $\min L_k = 2k + 4$, $\min L_k \setminus \{2k+4\}=2k+r+2 = \min L_k + (r-2)$, and $\max L_k = k (r+1) + 2r$. We consider a factorization of minimal length and one of maximal length, say
\[
B_k = 0^{v_k}X_1 \cdot \ldots \cdot X_{2k+4-v_k} = 0^{v_k}Y_1 \cdot \ldots \cdot Y_{k(r+1)+2r-v_k}
\]
where all $X_i, Y_j \in \mathcal A (G) \setminus \{0\}$. Then
\[
v_k + 2 \big(k(r+1)+2r-v_k\big) \le v_k + \sum_{\nu=1}^{k(r+1)+2r-v_k}|Y_{\nu}| = |B_k| = v_k + \sum_{\nu=1}^{2k+4-v_k}|X_{\nu}| \le v_k+(2k+4-v_k)(r+1) \,.
\]
Since the difference between the upper and lower  bound equals $4 - v_k(r-1)$,
it  follows that $v_k\le 1$, that at most $4$ of the atoms $Y_1, \ldots, Y_{k(r+1)+2r-v_k}$ do not have length $2$, at most four of the atoms $X_1, \ldots, X_{2k+4-v_k}$ do not have length $r+1$, and thus  at least $k$ of the $X_i$ have length $r+1$.
Since $\mathcal A (G)$ is finite, it follows  that, for all sufficiently large $k$, any factorization of $B_k$ of minimal length  contains a minimal zero-sum sequence of length $r+1$ with multiplicity at least $6$.

Now suppose that $k$ is sufficiently large that this holds, and
without restriction  suppose that  $V_0$ is the atom with multiplicity $6$.
By Lemma  \ref{lem_charr-2}, $| \supp (B_k) \setminus (G_0 \cup \{0\}) | = 1$ and this additional element is the sum of two distinct elements from $G_0$.
Without restriction we may suppose that  $e_0 + e_r = \sum_{i=1}^{r-1}e_i$ is this element.
We set $I = [1,r-1]$ and assert that $\mathsf{v}_{e_I}(B_k) \in [2,4]$.

Assume to the contrary that $\mathsf{v}_{e_I}(B_k)=1$. Then $U_I$ and $V_I$ are the only minimal zero-sum sequences  containing $e_I$ that divide $B_k$. We set $B_k= U_I C_k = V_I D_k$, with $C_k, D_k \in \mathcal B (G_0)$, and obtain that  $\mathsf{Z} (B_k) = U_I \mathsf{Z} (C_k) \cup  V_I \mathsf{Z} (D_k)$. By Lemma \ref{3.5}, $\mathsf L (C_k)$ and $\mathsf L (D_k)$ are arithmetical progressions with difference $r-1$, and thus $\mathsf{L}(B_k)$ is a union of two arithmetical progression with difference $r-1$, a contradiction to $\mathsf L (B_k) = L_k$.

The only  minimal zero-sum sequences containing $e_I$ over $\supp (B_k) \subset  G_0 \cup \{0, e_I\}$ are $e_I^2$, $U_I$, and $V_I$, having lengths $2$, $r$, and $3$, respectively. If $e_I^2$ occurs, then rechecking the above chain of inequalities shows that there are at most two minimal zero-sum sequences in a factorization of minimal length that do not have length $r+1$, and hence  $\mathsf{v}_{e_I}(B_k)\le 4 $. If $e_I^2$ does not occur, then we also obtain that $\mathsf{v}_{e_I}(B_k)\le 4 $, because we know that there are at most $4$ of the minimal zero-sum sequences in a factorization of minimal length do not have length $r+1$.

Now, we assert that $\max  \mathsf{L}  (B_k) - 1  \in \mathsf{L}  (B_k)$,  a contradiction to $\max L_k - 1 \notin L_k$.
Consider a factorization  $z \in \mathsf Z (B_k)$ of maximal length $|z| = \max \mathsf L (B_k)$.
We know that  most $4$ atoms dividing $z$  do not have length $2$, and thus the atoms $e_0^2$ and $e_r^2$ divide $z$; recall that $V_0$ has multiplicity $6$ in $B_k$.
Since $\mathsf{v}_{e_I}(B_k)\in [2, 4]$ and  the only atoms containing $e_I$ over $\supp (B_k) \subset  G_0 \cup \{0, e_I\}$ are $e_I^2$, $U_I$, and $V_I$, $z$
is divisible by $e_I^2$, or by $U_I^2$, or by $V_I^2$, or by $U_IV_I$. Clearly, no factorization of maximal length is divisible by  $U_I^2$ or by $V_I^2$. Since $U_IV_I = e_I^2 V_0$, we may assume without restriction that $z$ is divisible by  the atom $e_I^2$.
Since $z$ is also divisible by $e_0^2$ and by $e_r^2$, and since $e_I^2e_0^2e_r^2 =V_I^2$, we obtain  a factorization of length $|z|-1 = \max  \mathsf{L}  (B_k) - 1$, yielding the desired contradiction.
\end{proof}

\bigskip

We continue with elementary $3$-groups.
If $r \in [1,3]$, then $\Delta (C_3^r) = \Delta^* (C_3^r) = [1, \max \{r-1, 1\}]$ (this follows from \cite[Corollary 5.1]{Ge-Gr-Sc11a}). If $r \ge 4$, then $[1, r-1] = \Delta^* (C_3^r) \subset \Delta (C_3^r)$, and it is an open problem whether equality holds or not.

\medskip
\begin{proposition} \label{3.8}
Let $G = C_3^r$ with $r \in \N$.
\begin{enumerate}
\item If $r=1$, then $\mathcal L (G) = \{y+2k+ [0,k] \mid y, k \in \N_0\}$. In particular, $\mathcal L (G)$ is additively closed.

\smallskip
\item If $r=2$, then
      \[
      \mathcal L (G) = \big\{ \{ 1 \}  \big\} \cup \big\{ [2k, \nu] \mid k \in \N_0 , \nu \in [2k, 5k] \big\} \cup \big\{ [2k+1, \nu] \mid k \in \N, \nu \in [2k+1, 5k+2] \big\} \,.
      \]
      In particular, $\mathcal L (G)$ is additively closed.

\smallskip
\item If $r \ge 3$, then $\mathcal L (G)$ is not additively closed.
\end{enumerate}
\end{proposition}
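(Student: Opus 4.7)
Parts (1) and (2) are essentially bookkeeping. For $r=1$, $C_3$ is cyclic of order $3 \le 4$, so additive closure is given by Proposition \ref{3.2}, and the explicit form $\mathcal L(C_3) = \{y + 2k + [0,k] : y, k \in \N_0\}$ comes from computing $\mathsf L(g^m(-g)^n)$ for a generator $g$. For $r=2$, an explicit description of $\mathcal L(C_3^2)$ is available (e.g.\ \cite[Theorem 7.3.2]{Ge-HK06a}); additive closure then reduces to checking that the sumset of two members of the three listed families is again listed---two even intervals sum to an even interval under $\nu_1+\nu_2 \le 5(k_1+k_2)$; two odd intervals sum to an even interval with $k = k_1+k_2+1$ and $\nu_1+\nu_2 \le 5(k_1+k_2+1)$; mixed sums give odd intervals; and $\{1\}$ merely shifts within the families.

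For part (3), I would fix a basis $(e_1, \ldots, e_r)$ of $G = C_3^r$, set $v = e_1+\cdots+e_r$, and take $U = e_1^2 e_2^2 \cdots e_r^2 v$, a minimal zero-sum sequence of length $\mathsf D(G) = 2r+1$. A direct enumeration (parallel to Lemmas \ref{atoms} and \ref{lem_length}) shows that the atoms dividing $U(-U)$ are $U$, $-U$, the length-$2$ atoms $e_i(-e_i)$ and $v(-v)$, and the hybrid atoms
\[
A_J = v \prod_{i \in J} e_i^2 \prod_{i \notin J}(-e_i), \qquad A_J' = (-v) \prod_{i \in J}(-e_i)^2 \prod_{i \notin J} e_i
\]
of length $r+1+|J|$ for $J \subset [1,r]$. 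Since $\mathsf v_v(U(-U)) = \mathsf v_{-v}(U(-U)) = 1$, every factorization either is $U \cdot (-U)$, or is ``all length-$2$'' (of length $2r+1$), or pairs exactly one $A_J$ with one $A_{J'}'$; analyzing the multiplicity constraints on the $e_i$'s forces $J = J'$ in the last case, yielding a factorization of length $2 + (r-|J|)$. Hence
\[
L_1 := \mathsf L(U(-U)) = [2, r+2] \cup \{2r+1\},
\]
with an isolated maximum separated from $[2, r+2]$ by $r-2$ missing integers, which is positive when $r \ge 3$.

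Assume toward contradiction that $\mathcal L(G)$ is additively closed. Then $k L_1 \in \mathcal L(G)$ for every $k$, and the second-largest element of $k L_1$ is $(k-1)(2r+1) + (r+2) = k(2r+1) - (r-1)$, so the integer $k(2r+1) - 1$ does not lie in $k L_1$ when $r \ge 3$. Writing $k L_1 = \mathsf L(B_k)$, the ratio $\max \mathsf L(B_k)/\min \mathsf L(B_k) = (2r+1)/2 = \mathsf D(G)/2$ attains the maximum elasticity $\rho(G)$, so by the characterization recalled in Section \ref{3}, $B_k = (-W_1) W_1 \cdots (-W_k) W_k$ with each $W_i \in \mathcal A(G)$ of length $2r+1$. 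In particular $B_k = -B_k$, so $\mathsf v_g(B_k) = \mathsf v_{-g}(B_k)$ for every $g$, and pairing each $g$ from the $W_i$'s with a $-g$ from the $-W_j$'s yields the ``all length-$2$'' factorization of $B_k$ of length $|B_k|/2 = k(2r+1)$. For $k$ sufficiently large, pigeonhole on $|B_k| = 2k(2r+1)$ inside the finite set $G \setminus \{0\}$ produces some $g$ with $\mathsf v_g(B_k) \ge 3$; since $\exp G = 3$, both $g^3$ and $(-g)^3$ are minimal zero-sum sequences of length $3$, and the swap $(g(-g))^3 \mapsto g^3 \cdot (-g)^3$ replaces three length-$2$ atoms by two length-$3$ atoms, shortening the factorization by exactly one. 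Therefore $k(2r+1) - 1 \in \mathsf L(B_k)$, contradicting $k(2r+1) - 1 \notin k L_1$.

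The main obstacle is the initial computation of $L_1 = \mathsf L(U(-U))$: one has to verify that the listed atoms are the only minimal zero-sum sequences dividing $U(-U)$, and that the multiplicity constraints in any non-trivial factorization force $J = J'$, so that no length in $\{r+3, \ldots, 2r\}$ appears. This is a careful but direct case analysis parallel to Lemmas \ref{atoms}--\ref{lem_length} in the elementary $2$-group setting, with a modest added intricacy because elements of order $3$ yield dual variants of hybrid atoms (pivoting on $v$ versus $-v$). Once $L_1$ has been pinned down, the swap-and-pigeonhole argument is short and uses only two ingredients: the elasticity characterization of sequences attaining $\max/\min = \mathsf D(G)/2$, and the fact that $\exp G = 3$ supplies length-$3$ minimal zero-sums $g^3$ for every non-zero $g \in G$.
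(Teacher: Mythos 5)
Your part (3) is essentially the paper's own argument: the same sequence $U = e_1^2\cdots e_r^2(e_1+\cdots+e_r)$, the same computation $\mathsf L\big(U(-U)\big)=[2,r+2]\cup\{2r+1\}$, the same use of the elasticity characterization to write $B_k$ as a product of pairs $(-W_i)W_i$ with $|W_i|=\mathsf D(G)=2r+1$, and the same swap $\big(g(-g)\big)^3 \mapsto (g^3)\big((-g)^3\big)$ producing the forbidden length $k(2r+1)-1$. The only cosmetic difference is that you locate an element of multiplicity $\ge 3$ by pigeonhole on $\supp(B_k)$, while the paper pigeonholes on the finitely many atoms of maximal length to get some $k_\nu\ge 3$; both work, and your verification that every nontrivial factorization of $U(-U)$ pairs $A_J$ with $A_J'$ (forcing lengths in $[2,r+2]$) is the same case analysis the paper carries out.

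The genuine gap is in part (2). The description of $\mathcal L(C_3\oplus C_3)$ is \emph{not} available in \cite[Theorem 7.3.2]{Ge-HK06a}: that result covers only groups with $\mathsf D(G)\le 4$, whereas $\mathsf D(C_3^2)=5$, and indeed the paper has to prove the description from scratch. This requires two nontrivial directions that your proposal does not supply: (i) every $L\in\mathcal L(C_3^2)$ is an interval of the stated shape, which uses $\Delta(C_3^2)=\{1\}$ (\cite[Corollary 6.4.9]{Ge-HK06a}) together with the exact values $\rho_{2k}(G)=5k$ and $\rho_{2k+1}(G)=5k+2$ (\cite[Theorem 6.3.4]{Ge-HK06a}); and (ii) every listed interval is actually realized, which the paper obtains by an induction on $k$ using explicit sequences ($U$ as above together with $W_3=e_1e_2(-e_0)$ and $W_4=e_1^2e_2(e_1-e_2)$, whose squares $(-W)W$ realize $[2,3]$, $[2,4]$, $[2,5]$, and products of which fill in all intermediate right endpoints). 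Calling this ``bookkeeping'' and outsourcing it to a citation that does not apply leaves the $r=2$ case unproved; once the description is established, your sumset check of the three families is the easy part and matches the paper's remark that additive closure ``can be verified immediately.'' Parts (1) and (3) as you present them are fine.
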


\begin{proof}
Let $r \ge 2$, $(e_1, \ldots, e_r)$ be a basis of $G$, and $U = e_1^2 \cdot \ldots \cdot e_r^2 e_0$ with $e_0 = e_1+ \ldots + e_r$. We assert that
\[
\mathsf L \big( (-U)U \big) = [2, r+2] \cup \{2r+1\} \,.
\]
Suppose that $(-U)U = V_1 \cdot \ldots \cdot V_s$ with $s \in \N$ and $V_1, \ldots, V_s \in \mathcal A (G)$. If $(-e_0)e_0 \in \{V_1, \ldots , V_s\}$, then $s=2r+1$. Otherwise, we may suppose without restriction that $e_0 \t V_s$ and $-e_0 \t V_{s-1}$. There is a subset $J \subset [1, r]$ such that
\[
V_s = e_0 \prod_{j \in J} (-e_j) \prod_{i \in I} e_i^2 \quad \text{and} \quad I = [1, r] \setminus I \,.
\]
This implies that
\[
V_{s-1} = (-e_0) \prod_{j \in J} e_j \prod_{i \in I} (-e_i)^2 = - V_s \,.
\]
Therefore we obtain that $V_1 \cdot \ldots \cdot V_{s-2} = \prod_{j \in J} \Big( (-e_j)e_j \Big)$ and hence $s = |J|+2$. Summing up we infer that
\[
\mathsf L \big( (-U)U \big) = \{2r+1\} \cup \{ 2+|J| \mid J \subset [0,r] \} = \{2r+1\} \cup [2, r+2] \,.
\]

\smallskip
1. By \cite[Theorem 7.3.2]{Ge-HK06a}, $\mathcal L (G)$ has the given form, which immediately implies that $\mathcal L (G)$ is additively closed.

\smallskip
2. Suppose that $r=2$. It is sufficient to show that $\mathcal L (G)$ has the asserted form. Then it can be verified immediately that $\mathcal L (G)$ is additively closed.

We have $\mathsf D (G)=5$, $\rho (G)=5/2$,  $\Delta (G) = \{1\}$ (\cite[Corollary 6.4.9]{Ge-HK06a}), and $\rho_k (G) = \lfloor k \mathsf D (G)/2 \rfloor$ by \cite[Theorem 6.3.4]{Ge-HK06a} for all $k \ge 2$. These facts imply that every $L \in \mathcal L (G)$ equals one of the sets given on the right hand side. So it remains to verify that conversely every set $L$ given on the right hand side can be realized as a set of lengths in $\mathcal L (G)$. Clearly, $\{k\} \in \mathcal L (G)$ for each $k \in \N_0$.  Let $k \in \N$.

First, we assert that $[2k, \nu] \in \mathcal L (G)$ for all $\nu \in [2k, 5k]$, and we proceed by induction on $k$.
The construction above shows that $[2, 5] \in \mathcal L (G)$. If $W_3 = e_1e_2(-e_0)$, then  $\mathsf L \big( (-W_3)W_3 \big) = [2,3] \in \mathcal L (G)$. If $W_4 = e_1^2e_2(e_1-e_2)$, then $\mathsf L \big( (-W_4)W_4 \big) = [2, 4] \in \mathcal L (G)$. Thus the assertion holds for $k=1$.
Suppose the assertion holds for $k \in \N$. If $\nu \in [2k, 5k]$ and $A_{\nu} \in \mathcal B (G)$ with $\mathsf L (A_{\nu}) = [2k, \nu]$, then $\mathsf L (0^2 A_{\nu}) = [2k+2, \nu+2]$. Thus it remains to show that $[2k+2, 5k+3], [2k+2, 5k+4]$, and $[2k+2, 5k+5] \in \mathcal L (G)$. If $U, W_3$, and $W_4$ are as above, then
\[
\begin{aligned}
\mathsf L \big( (-U)^k U^k \big) & = [2k, 5k] \,, \\
\mathsf L \big( (-U)^k U^k (-W_3)W_3 \big) & = [2k+2, 5k+3] \,, \\
\mathsf L \big( (-U)^k U^k (-W_4) W_4 \big) & = [2k+2, 5k+4] \,, \quad \text{and} \\
\mathsf L \big( (-U)^{k+1} U^{k+1} \big) & = [2k+2, 5k+5] \,. \\
\end{aligned}
\]

Next, we assert that $[2k+1, \nu] \in \mathcal L (G)$ for all $\nu \in [2k+1, 5k+2]$. If  $k \in \N$, $\nu \in [2k, 5k]$, and $A_{\nu} \in \mathcal B (G)$ with $\mathsf L (A_{\nu}) = [2k, \nu]$, then $\mathsf L (0 A_{\nu}) = [2k+1, \nu+1]$. Since $\rho_{2k+1} (G) = 5k+2$, there is a $B_k \in \mathcal B (G)$ with $2k+1, 5k+2 \in \mathsf L (B_k)$ and hence  $\mathsf L (B_k) = [2k+1, 5k+2] \in \mathcal L (G)$.

\smallskip
3. Suppose that $r \ge 3$. Let $k \in \N$. We consider the $k$-fold sumset $L_k = L+ \ldots + L$ of $L = \mathsf L \big( (-U)U \big)$. We assert that, for all sufficiently large $k$,  $L_k \notin \mathcal L (G)$, which implies that $\mathcal L (G)$ is not additively closed. We set $\{U_1, -U_1, \ldots, U_s, -U_s\}  = \{ W \in \mathcal A (G) \mid |W| = \mathsf D (G) \}$.  Let $k \in \N$ and suppose  that $L_k \in \mathcal L (G)$. Since $\max L_k /\min L_k$ equals $\rho(G)$ and by a result recalled in Section \ref{2}, there exist $k_1, \ldots, k_s \in \N_0$ with $k_1+ \ldots + k_s = k$ such that
\[
\mathsf L \big( (-U_1)^{k_1}U_1^{k_1}  \cdot \ldots \cdot (-U_s)^{k_s}U_s^{k_s} \big) = L_k \,.
\]
Note that $\max  L_k = k(2r+1)$ and that $\max \big( L_k \setminus \{ k(2r+1) \} \big) = k(2r+1)-(r-1)$. There is a unique factorization of length $\max L_k$. It consists entirely of atoms having length two. If $k$ is sufficiently large, then there is a $\nu \in [1, s]$ such that $k_{\nu} \ge 3$, say $\nu=1$ and $U_1 = gS$ with $g \in G$ and $S \in \mathcal F(G)$. Then the factorization of length $\max L_k$ contains the product $\big( (-g)g \big)^3$. Since
\[
\big( (-g)g \big)^3  = (g^3) \big( (-g)^3 \big) \,,
\]
it follows that $\max L_k -1 \in \mathsf L \big( (-U_1)^{k_1}U_1^{k_1}  \cdot \ldots \cdot (-U_s)^{k_s}U_s^{k_s} \big) $, a contradiction.
\end{proof}

\smallskip
Finally, we handle the generic case.

\medskip
\begin{proposition} \label{3.9}
Let $G$ be a finite abelian group with  $\exp (G) =n \ge 4$ and $r = \mathsf r (G) \ge 2$. Then $\mathcal L (G)$ is not additively closed.
\end{proposition}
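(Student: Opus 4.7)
Plan for Proposition~\ref{3.9}.

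My strategy generalizes the Lemma~\ref{3.4} argument for $C_5\oplus C_5$. Fix a basis $(e_1,\ldots,e_r)$ of $G$ with $\ord(e_i)=n_i$ and $n_1\mid\cdots\mid n_r=n$, and take $V=e_1^{n_1-1}\cdots e_r^{n_r-1}(e_1+\cdots+e_r)$; this is an atom of length $1+\sum_{i=1}^{r}(n_i-1)$, which realises $\mathsf D(G)$ whenever the standard equality $\mathsf D(G)=1+\sum(n_i-1)$ holds (for example in rank $\le 2$ and for $p$-groups), and in the remaining cases one picks a suitable length-$\mathsf D(G)$ atom of the same ``one distinguished element plus basis powers'' shape. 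Let $L=\mathsf L((-V)V)$. Enumerating the atoms dividing $(-V)V$ shows that the only factorizations are $V\cdot(-V)$ (length $2$), the all-pairs factorization $\prod_i(e_i(-e_i))^{n_i-1}\cdot((\sum e_i)(-\sum e_i))$ (length $\mathsf D(G)$), and the one obtained by replacing $(\sum e_i)(-\sum e_i)$ by the two length-$(r+1)$ atoms $(\sum e_i)(-e_1)\cdots(-e_r)$ and $(-\sum e_i)(e_1)\cdots(e_r)$ (length $\sum(n_i-2)+2$). Hence $L=\{2,\ \sum(n_i-2)+2,\ \mathsf D(G)\}$ and $\max L/\min L=\mathsf D(G)/2=\rho(G)$.

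Assume for contradiction that $\mathcal L(G)$ is additively closed; then $L_k:=kL\in\mathcal L(G)$ for every $k$, and write $L_k=\mathsf L(B_k)$. Because $\max L_k/\min L_k=\rho(G)$, the rigidity statement for elements of maximal elasticity recalled in Section~\ref{3} forces
\[
B_k\;=\;\prod_{i=1}^{s}(-U_i)^{k_i}\,U_i^{k_i},\qquad |U_i|=\mathsf D(G),\ \sum_{i=1}^{s}k_i=k,
\]
where $\pm U_1,\ldots,\pm U_s$ run over the finitely many atoms of length $\mathsf D(G)$ in $G$. For $k$ sufficiently large, pigeonhole gives some $\nu$ with $k_\nu\ge 2$.

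The core combinatorial input is then the uniform claim $r+1\in\mathsf L(W^2)$ for every atom $W\in\mathcal A(G)$ of length $\mathsf D(G)$. For the model $V$ this is immediate, because $V^2=e_1^{2n_1-2}\cdots e_r^{2n_r-2}(\sum e_i)^2$ factors as $e_1^{n_1}\cdots e_r^{n_r}\cdot Q$, where $Q=e_1^{n_1-2}\cdots e_r^{n_r-2}(\sum e_i)^2$ is a single atom (a direct subsum check exhibits only the empty subsum and $Q$ itself as zero-sums). Substituting this length-$(r+1)$ factorization inside $U_\nu^{k_\nu}=U_\nu^{k_\nu-2}\cdot U_\nu^2$, keeping the trivial factorization of $(-U_\nu)^{k_\nu}$ into $k_\nu$ atoms, and pairing everywhere else, one obtains a factorization of $B_k$ of length $(k_\nu-2)+(r+1)+k_\nu+2(k-k_\nu)=2k+(r-1)$. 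On the other hand, from $L=\{2,\sum(n_i-2)+2,\mathsf D(G)\}$ one computes $\min(L_k\setminus\{2k\})=2k+\sum(n_i-2)\ge 2k+r(n-2)>2k+(r-1)$ (using $n\ge 4$ and $r\ge 2$), so $2k+(r-1)\notin L_k$, contradicting $\mathsf L(B_k)=L_k$. The main obstacle is the uniform verification of $r+1\in\mathsf L(W^2)$ for every length-$\mathsf D(G)$ atom $W$; the known structural descriptions (Gao--Geroldinger in rank $2$, and analogues for $p$-groups and higher rank) handle the generic case, while the residual small-group exceptions such as $C_2\oplus C_4$ and $C_5\oplus C_5$ have already been dispatched in Lemmas~\ref{3.3} and \ref{3.4}.
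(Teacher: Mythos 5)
Your approach is genuinely different from the paper's (which compares $\Delta_1(G)$ with $\Delta^*(G)$ via the structure theorem for sets of lengths), but it has several gaps that I do not see how to close. First, the computation of $L=\mathsf L\big((-V)V\big)$ is incomplete: besides the three factorizations you list, there are ``mixed'' ones in which the atom containing $e_0=e_1+\cdots+e_r$ absorbs $(-e_j)$ for $j$ in a subset $J$ and $e_i^{n_i-1}$ for $i\notin J$; pairing it with its negative leaves $\prod_{j\in J}\big(e_j(-e_j)\big)^{n_j-2}$ and produces a factorization of length $2+\sum_{j\in J}(n_j-2)$ for every $J\subset[1,r]$. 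Already for $C_5\oplus C_5$ this gives $\{2,5,8,9\}$ (exactly as in Lemma \ref{3.4}), not $\{2,8,9\}$. Consequently $\min\big(L_k\setminus\{2k\}\big)=2k+\min\{\,n_j-2\mid n_j>2\,\}$ rather than $2k+\sum(n_i-2)$, and the target length $2k+(r-1)$ need not be missing from $L_k$ (for $C_4^3$, say, $r-1=2=n_1-2$), so the final contradiction disappears for many groups.

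Second, and more fundamentally, your ``core combinatorial input'' --- that $r+1\in\mathsf L(W^2)$ for \emph{every} atom $W$ of length $\mathsf D(G)$ --- requires a structural description of all minimal zero-sum sequences of maximal length. This is the inverse problem associated with the Davenport constant, which is open for almost all non-cyclic groups; the Gao--Geroldinger result you invoke is specific to $C_p\oplus C_p$ (and is precisely why the paper isolates $C_5\oplus C_5$ in a separate lemma), and no analogue exists for general rank or for non-$p$-groups. Third, the whole setup presupposes $\mathsf D(G)=1+\sum_{i=1}^r(n_i-1)$; this equality fails for certain groups of rank $\ge 4$, and for such groups no atom of your prescribed shape has length $\mathsf D(G)$, so $\max L/\min L<\rho(G)$ and the elasticity-rigidity step cannot even be invoked. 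The paper's proof avoids all of these obstacles: it only needs $\{2,d\}\in\mathcal L(G)$ for suitable $d$, deduces $d-2\in\Delta_1(G)$ from additive closedness, and then contradicts the known upper bounds on $\Delta^*(G)$ from Proposition \ref{3.1} and the cited results of Schmid and Geroldinger--Zhong.
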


\begin{proof}
If $G = C_5 \oplus C_5$ or $G= C_2 \oplus C_4$, then the assertion follows from Lemma \ref{3.3} and from Lemma \ref{3.4}. Let $G = C_{n_1} \oplus \ldots \oplus C_{n_r}$ with $1 < n_1 \t \ldots \t n_r$, $|G| \ge 5$, and suppose that $G$ is distinct from the above two groups.  Simple examples (\cite[Theorem 6.6.2]{Ge-HK06a}) show that
\[
\{2,d \} \in \mathcal L(G) \quad \text{for all} \quad d \in \bigl[ 3,\, \max\{ n,\, d_0 \} \bigr]\,, \quad \text{where} \quad d_0 = 1 + \sum_{i=1}^{\mathsf r (G)} \Bigl\lfloor \frac{n_i}{2}
\Bigr \rfloor\,.
\]
Assume to the contrary
that $\mathcal L (G)$ is additively closed.
Then $d-2 \in \Delta_1 (G)$ for each $d$ as above,  in particular $d_0 -2  \in \Delta_1 (G)$, and the interval $[1, n-2] \subset \Delta_1 (G)$.
We use that
$\max \Delta_1 (G) \le \max \Delta^* (G) = \max \{r-1, n-2\}$ by Proposition \ref{3.1}.

If $r-1 \ge n-2$, then $n \ge 4$ implies that $ d_0-2 > r-1 = \max \Delta^* (G)$, a contradiction.
Thus it follows that $r-1 < n-2$. We distinguish three cases.

\smallskip
\noindent CASE 1: \, $G=C_n \oplus C_n$.

If $n$ is even, then $d_0-2=n-1>n-2=\max \Delta^* (G)$, a contradiction. Suppose that $n$ is odd. Then $n \ge 7$ and $n-4 \in \Delta_1 (G)$. By \cite[Corollary 3.8]{Sc09c}, it follows that
\[
\max \Delta^* (C_n \oplus C_n) \setminus \{n-3, n-2\} = \frac{n-3}{2} \,.
\]
Since $n \ge 7$, it follows that $n-4 > \frac{n-3}{2}$, a contradiction.

\smallskip
\noindent CASE 2: \, $G$ has  a proper subgroup isomorphic to $C_n \oplus C_n$.

Then $d_0-2 \ge n-1 > n-2 = \max \Delta^* (G)$, a contradiction.

\smallskip
\noindent CASE 3: \, $G$ has  no subgroup isomorphic to $C_n \oplus C_n$.

Then it follows that $n_{r-1} \le n_r/2$. If $r=n-2$, then $n \ge 6$ (because $G \notin \{ C_2 \oplus C_4, C_5, C_5 \oplus C_5\}$) and thus
\[
d_0-2 \ge r-1 + \Big\lfloor \frac{n}{2} \Big\rfloor - 1 = n-4 + \Big\lfloor \frac{n}{2} \Big\rfloor > n-2 \,,
\]
a contradiction.

Suppose that $r \le n-3$. Then $n \ge 5$. If $n=5$, then $G$ is either cyclic or has a subgroup isomorphic to $C_5 \oplus C_5$, a contradiction. Thus $n \ge 6$. Then \cite[Theorem 3.2]{Sc09c} implies that
\[
\Delta^* (G) \subset [1, \max \{ \mathsf m (G), \lfloor n/2 \rfloor-1\}] \cup \{n-2\} \,, \quad \text{where}
\]
\[
\mathsf m (G) = \max \{ \min \Delta (G_0) \mid G_0 \subset G \ \text{is a non-half-factorial LCN-set}  \} \,.
\]
Since    $\mathsf m (G) < n-3$ by \cite{Ge-Zh15a}, it follows that $\max \{ \mathsf m (G), \lfloor n/2 \rfloor-1\}]  \le n-4$.
This implies that  $n-3 \notin \Delta^* (G)$, but $n-3 \in \Delta_1 (G)$, a contradiction to Proposition \ref{3.1}.2.
\end{proof}

\medskip
\begin{proof}[Proof of Theorem \ref{1.1}]
Let $H$ be a Krull monoid with class group $G$ and suppose that each class contains a prime divisor. By Proposition \ref{2.1}, it is sufficient to consider the monoid $\mathcal B (G)$ instead of the monoid $H$.

First suppose that $G$ is infinite. By the Realization Theorem of Kainrath, every finite subset $L \subset \N_{\ge 2}$ can be realized as a set of lengths in $\mathcal L (G)$. Thus we obtain that
\[
\mathcal L (G) = \{ L \subset \N_{\ge 2} \mid L \ \text{is finite and nonempty} \} \ \cup \ \{ \{0\}, \{1\} \} \,,
\]
(see \cite{Ka99a} or \cite[Theorem 7.4.1]{Ge-HK06a}), which shows that $\mathcal L (G)$ is additively closed.

Suppose now that $G$ is finite. Cyclic groups are considered in Proposition \ref{3.2}, elementary $2$-groups in Proposition \ref{prop_el2}, and elementary $3$-groups in Proposition \ref{3.8}. The case of non-cyclic groups with exponent $n \ge 4$ is settled by Proposition \ref{3.9}.
\end{proof}

\bigskip
\noindent
{\bf Acknowledgement.} We would like to thank the referee for reading the paper very carefully and for making many helpful remarks.

\bigskip

\providecommand{\bysame}{\leavevmode\hbox to3em{\hrulefill}\thinspace}
\providecommand{\MR}{\relax\ifhmode\unskip\space\fi MR }
\providecommand{\MRhref}[2]{%
  \href{http://www.ams.org/mathscinet-getitem?mr=#1}{#2}
}
\providecommand{\href}[2]{#2}


\begin{thebibliography}{10}

\bibitem{A-C-H-P07a}
J.~Amos, S.T. Chapman, N.~Hine, and J.~Paixao, \emph{Sets of lengths do not
  characterize numerical monoids}, Integers \textbf{7} (2007), Paper A50, 8p.

\bibitem{Ba-Ba-Go14}
D.~Bachman, N.~Baeth, and J.~Gossell, \emph{Factorizations of upper triangular
  matrices}, Linear Algebra Appl. \textbf{450} (2014), 138 -- 157.

\bibitem{Ba-Ge14b}
N.R. Baeth and A.~Geroldinger, \emph{Monoids of modules and arithmetic of
  direct-sum decompositions}, Pacific J. Math. \textbf{271} (2014), 257 -- 319.

\bibitem{Ba-Sm15}
N.R. Baeth and D.~Smertnig, \emph{Factorization theory from commutative to
  noncommutative settings}, J. Algebra, to appear.

\bibitem{Ba-Wi13a}
N.R. Baeth and R.~Wiegand, \emph{Factorization theory and decomposition of
  modules}, Am. Math. Mon. \textbf{120} (2013), 3 -- 34.

\bibitem{Ch11a}
Gyu~Whan Chang, \emph{Every divisor class of {K}rull monoid domains contains a
  prime ideal}, J. Algebra \textbf{336} (2011), 370 -- 377.

\bibitem{Ch-Sm13a}
Gyu~Whan Chang and D.~Smertnig, \emph{Factorization in the self-idealization of
  a {PID}}, Boll. Unione Mat. Ital. \textbf{IX,6(2)} (2013), 363 -- 377.

\bibitem{Ci-Ha-Se10a}
J.~Cilleruelo, Y.ould Hamidoune, and O.~Serra, \emph{Addition theorems in
  acyclic semigroups}, Additive Number Theory. Festschrift In Honor of the
  Sixtieth Birthday of Melvyn B. Nathanson (D.~Chudnovsky and G.~Chudnovsky,
  eds.), Springer, 2010, pp.~99 -- 104.

\bibitem{Fa12a}
A.~Facchini, \emph{Direct-sum decompositions of modules with semilocal
  endomorphism rings}, Bull. Math. Sci. \textbf{2} (2012), 225 -- 279.

\bibitem{Fr13a}
S.~Frisch, \emph{A construction of integer-valued polynomials with prescribed
  sets of lengths of factorizations}, Monatsh. Math. \textbf{171} (2013), 341
  -- 350.

\bibitem{Ga-Ge03b}
W.~Gao and A.~Geroldinger, \emph{On zero-sum sequences in $\mathbb{Z} /n
  \mathbb{Z} \oplus \mathbb{Z} /n \mathbb{Z}$}, Integers \textbf{3} (2003),
  Paper A08, 45p.

\bibitem{Ge09a}
A.~Geroldinger, \emph{Additive group theory and non-unique factorizations},
  Combinatorial {N}umber {T}heory and {A}dditive {G}roup {T}heory
  (A.~Geroldinger and I.~Ruzsa, eds.), Advanced Courses in Mathematics CRM
  Barcelona, Birkh{\"a}user, 2009, pp.~1 -- 86.

\bibitem{Ge13a}
\bysame, \emph{Non-commutative {K}rull monoids: a divisor theoretic approach
  and their arithmetic}, Osaka J. Math. \textbf{50} (2013), 503 -- 539.

\bibitem{Ge-Gr-Sc11a}
A.~Geroldinger, D.J. Grynkiewicz, and W.A. Schmid, \emph{The catenary degree of
  {K}rull monoids {I}}, J. Th{\'e}or. Nombres Bordx. \textbf{23} (2011), 137 --
  169.

\bibitem{Ge-HK06a}
A.~Geroldinger and F.~Halter-Koch, \emph{Non-{U}nique {F}actorizations.
  {A}lgebraic, {C}ombinatorial and {A}nalytic {T}heory}, Pure and Applied
  Mathematics, vol. 278, Chapman \& Hall/CRC, 2006.

\bibitem{Ge-Ka-Re15a}
A.~Geroldinger, F.~Kainrath, and A.~Reinhart, \emph{Arithmetic of seminormal
  weakly {K}rull monoids and domains}, J. Algebra, to appear.

\bibitem{Ge-Zh15a}
A.~Geroldinger and Qinghai Zhong, \emph{The set of minimal distances in {K}rull
  monoids}, submitted.

\bibitem{Gr13a}
D.J. Grynkiewicz, \emph{Structural {A}dditive {T}heory}, Developments in
  Mathematics, Springer, 2013.

\bibitem{HK98}
F.~Halter-Koch, \emph{Ideal {S}ystems. {A}n {I}ntroduction to {M}ultiplicative
  {I}deal {T}heory}, Marcel Dekker, 1998.

\bibitem{Ka99a}
F.~Kainrath, \emph{Factorization in {K}rull monoids with infinite class group},
  Colloq. Math. \textbf{80} (1999), 23 -- 30.

\bibitem{Ki-Pa01}
H.~Kim and Y.~S. Park, \emph{{K}rull domains of generalized power series}, J.
  Algebra \textbf{237} (2001), 292 -- 301.

\bibitem{Sc09c}
W.A. Schmid, \emph{Arithmetical characterization of class groups of the form
  $\mathbb{Z} /n \mathbb{Z} \oplus \mathbb{Z} /n \mathbb{Z}$ via the system of
  sets of lengths}, Abh. Math. Semin. Univ. Hamb. \textbf{79} (2009), 25 -- 35.

\bibitem{Sc09a}
\bysame, \emph{A realization theorem for sets of lengths}, J. Number Theory
  \textbf{129} (2009), 990 -- 999.

\bibitem{Sm13a}
D.~Smertnig, \emph{Sets of lengths in maximal orders in central simple
  algebras}, J. Algebra \textbf{390} (2013), 1 -- 43.

\end{thebibliography}
\end{document}